\DeclareOldFontCommand{\rm}{\normalfont\rmfamily}{\mathrm}
\def\F{\Bbb F}
\def\d{\operatorname{d}}
\def\Ker{\operatorname{Ker}}
\def\Id{\operatorname{Id}}
\def\Im{\operatorname{Im}}
\def\D{\operatorname{D}}
\def\g{\frak g}
\def\gl{\frak{gl}}
\theoremstyle{plain}\swapnumbers
\newtheorem{Theorem}{Theorem}[section]
\newtheorem{Prop}[Theorem]{Proposition}
\newtheorem{Remark}[Theorem]{Remark}
\newtheorem{claim}{Claim}
\title[On Cohomology group of current Lie algebras]{On Cohomology group of current Lie algebras}
\author{R. Garc\'{\i}a-Delgado}
\address{Centro de Investigaci\'on en Matem\'aticas A. C.,  Unidad M\'erida; Yucat\'an, M\'exico, Carretera Sierra Papacal Chuburna Puerto Km 5, 97302 Sierra Papacal, Yuc.}
\email{rosendo.garciadelgado@alumnos.uaslp.edu.mx}
\keywords {Lie algebras cohomology; Current Lie algebras; Tensor product; Associative and commutative algebras; Semisimple Lie algebras.}
\subjclass{
Primary:
17B05  	
17B56  	
17B60
 Secondary: 17B10 17B20
}
\date{\today}
\begin{document}

\maketitle

\begin{abstract}
In this work we state a result that relates the cohomology groups of a Lie algebra $\g$ and a current Lie algebra $\g \otimes \mathcal{S}$, by means of a short exact sequence -- similar to the universal coefficients theorem for modules --, where $\mathcal{S}$ is a finite dimensional, commutative and associative algebra with unit over a field $\F$. Although this result can be applied to any Lie algebra, we determine the cohomology group of $\g \otimes \mathcal{S}$, where $\g$ is a semisimple Lie algebra. 
\end{abstract}

\section*{Introduction}

Let $\g$ be a Lie algebra with bracket $[\,\cdot\,,\,\cdot\,]$ and $\mathcal{S}$ be an associative and commutative algebra over $\F$ with product $(s,t) \mapsto s\,t$, for all $s,t$ in $\mathcal{S}$. The skew-symmetric and bilinear map $[\,\cdot\,,\,\cdot\,]_{\g \otimes \mathcal{S}}$ defined on $\g \otimes \mathcal{S}$, by:
$$
[x \otimes s,y \otimes t]_{\g \otimes \mathcal{S}}=[x,y] \otimes s\,t,\,\, \text{ for all }\,x,y \in \g,\,\text{ and }\,s,t \in \mathcal{S},
$$ 
yields a Lie algebra in $\g \otimes \mathcal{S}$, which is called the {\bf current Lie algebra of $\g$ by $\mathcal{S}$}.
\smallskip

Let $\rho:\g \to \gl(V)$ be a representation of $\g$ on a vector space $V$, then $V$ is called a \textbf{$\g$-module}. The representation $\rho$ can be extended to a representation $R$ of $\g \otimes \mathcal{S}$ on the vector space $V \otimes \mathcal{S}$ by means of:
\begin{equation}\label{representacion current}
R(x \otimes s)(v \otimes t)=\rho(x)(v) \otimes s\,t,\text{ for all }x,y \in \g,\,v \in V,\,s,t \in \mathcal{S}.
\end{equation}

Let $C(\g;V)=C^0(\g;V) \oplus \ldots \oplus C^{p}(\g;V) \oplus \ldots$ be the \textbf{space of cochains from $\g$ into $V$}, where $C^0(\g,V)$ is equal to $V$ and $C^p(\g;V)$ is the space of the alternating $p$-multilinear maps of $\g$ with values in $V$. For any $\g$-mdule $V$, let $\d:C(\g;V) \to C(\g;V)$ be the differential map given by:
\begin{equation}\label{Dif}
\begin{split}
& \d\lambda(x_1,\ldots,x_{p+1})=\sum_{j=1}^{p+1}(-1)^{j-1}\rho(x_j)(\lambda(x_1,\ldots,x_{\hat{j}},\ldots,x_{p+1}))\\
\,&+\sum_{j<k}(-1)^{j+k}\lambda([x_j,x_k],x_1,\ldots,x_{\hat{j}},\ldots,x_{\hat{k}},\ldots,x_{p+1}),\qquad p>0.
\end{split}
\end{equation} 
where $\lambda$ is in $C^p(\g;V)$ and $x_1,\ldots,x_{p+1}$ are in $\g$. For $p=0$ we let $\d(v)(x)=\rho(x)(v)$ where $v$ is in $V$ and $x$ is in $\g$. 
\smallskip

The aim of this work is to set a result that relates the cohomology groups $\mathcal{H}(\g \otimes \mathcal{S};V \otimes \mathcal{S})$ and $\mathcal{H}(\g;V)$, similar to the Universal coefficient theorems for modules (see \cite{Cartan}, Chapter VI, \S 3, \textbf{Thm. 3.3}). 
\smallskip

To achieve our goal, in \textbf{Prop. \ref{composicion}} we introduce a map between the set of cochains of $\g$ and cochains of $\g \otimes \mathcal{S}$. Next we prove that there exists a surjective linear map $\alpha$ between $\mathcal{H}(\g \otimes \mathcal{S};V \otimes \mathcal{S})$ and $\mathcal{H}(\g;V) \otimes \mathcal{S}$ (see \textbf{Prop. \ref{morfismo natural}}). In \textbf{Thm. \ref{teorema de coeficientes universales}} we determine the kernel of $\alpha$ and we state a result that relates the cohomology groups $\mathcal{H}(\g \otimes \mathcal{S};V \otimes \mathcal{S})$ and $\mathcal{H}(\g;V) \otimes \mathcal{S}$ by means of a short exact sequence. Finally, we determine the cohomology group $\mathcal{H}(\g \otimes \mathcal{S};V \otimes \mathcal{S})$, when $\g$ is a semisimple Lie algebra and $V$ is an irreducible $\g$-module. 
\smallskip

The results obtained here are focused at knowing the cohomology group $\mathcal{H}(\g \otimes \mathcal{S};V \otimes \mathcal{S})$, based on the cohomology group $\mathcal{H}(\g;V)$. In \cite{Zusmanovich-1}, is determined formulas for the first and second cohomology groups of a current Lie algebra $\g \otimes \mathcal{S}$, with coefficients in a module $V \otimes \mathcal{A}$, where $V$ is a $\g$-module and $\mathcal{A}$ is a $\mathcal{S}$-module. In \textbf{Thm. 2.1} of \cite{Zusmanovich}, is given a formula for the second cohomology group of $\g \otimes \mathcal{S}$, with coefficients in the trivial module and $\mathcal{S}$ has no unit. In  \cite{Neeb} is given a description of the cohomology group $\mathcal{H}(\g \otimes \mathcal{S};\mathcal{V})$, where $\mathcal{V}$ is a trivial module of $\g \otimes \mathcal{S}$. It seems that one of the first results with this focus appears in \cite{Berezin}, where it is shown that cohomology of $\g \otimes \mathcal{S}$, where $\mathcal{S}$ is a local algebra, can be reduced to cohomology of $\g$. On the other hand, it is unknown if there exists a criterion for recognizing whether an arbitrary Lie algebra is a current Lie algebra. A step in this direction can be found in \cite{Chaktoura}, where some examples are given in 4-dimensional current Lie algebras.

\section{The map $\mathcal{L}:C(\g \otimes \mathcal{S};V) \to C(\g;V)$}

The proof of the following result is standard and we omit it.

\begin{Prop}\label{prop1}{\sl
Let $V$ and $\mathcal{S}$ be finite dimensional vector spaces over $\F$. Let $\{s_1,\ldots,s_m\}$ be a basis of $\mathcal{S}$. For any $x$ in $V \otimes \mathcal{S}$ there are $v_1,\ldots,v_m$ in $V$ such that: $x=v_1 \otimes s_1+\ldots+v_m \otimes s_m$.
}
\end{Prop}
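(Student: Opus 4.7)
The plan is to proceed directly from the construction of the tensor product, exploiting the bilinearity of $\otimes$ together with the fact that $\{s_1,\ldots,s_m\}$ spans $\mathcal{S}$. Since only existence of the decomposition is claimed (not uniqueness), no linear independence argument is required beyond the spanning property.

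First I would recall that every element $x\in V\otimes\mathcal{S}$ is, by the very definition of the tensor product, a finite sum of the form $x=\sum_{i=1}^{N} w_i\otimes t_i$ with $w_i\in V$ and $t_i\in\mathcal{S}$. Then, since $\{s_1,\ldots,s_m\}$ is a basis of $\mathcal{S}$, I would expand each $t_i$ uniquely as $t_i=\sum_{j=1}^{m}\alpha_{ij}s_j$ with scalars $\alpha_{ij}\in\F$. Substituting and using bilinearity of $\otimes$ would give
\[
x \;=\; \sum_{i=1}^{N}\sum_{j=1}^{m} w_i\otimes(\alpha_{ij}s_j)
\;=\; \sum_{j=1}^{m}\Bigl(\sum_{i=1}^{N}\alpha_{ij}w_i\Bigr)\otimes s_j.
\]
Setting $v_j:=\sum_{i=1}^{N}\alpha_{ij}w_i\in V$ for each $j=1,\ldots,m$ would yield the claimed decomposition $x=v_1\otimes s_1+\cdots+v_m\otimes s_m$.

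There is no real obstacle here; the only subtlety worth flagging is that the statement is about existence of representatives $v_j$, not their uniqueness. Uniqueness does in fact hold (one can see it from the natural isomorphism $V\otimes\mathcal{S}\cong V^{\oplus m}$ induced by the choice of basis), but it is not needed for the subsequent developments in the paper, which only require that every element of $V\otimes\mathcal{S}$ admits \emph{some} expression in the prescribed form. This is why the author labels the result standard and omits the proof.
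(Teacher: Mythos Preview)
Your proof is correct and is exactly the standard argument one would expect; the paper itself omits the proof entirely, stating only that ``the proof of the following result is standard and we omit it.'' Your remark about uniqueness (via $V\otimes\mathcal{S}\cong V^{\oplus m}$) is also correct and, as you note, is implicitly relied upon later in the paper when the components $\Lambda_j$ of a cochain are extracted in \eqref{j1}--\eqref{widehat}.
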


Let $\g \otimes \mathcal{S}$ be the current Lie algebra of $\g$ by $\mathcal{S}$, where $\mathcal{S}$ is a $m$-dimensional commutative and associative algebra with unit $1$ over $\F$. We use the same symbol for the bracket on $\g \otimes \mathcal{S}$ and the bracket on $\g$, that is: $[x \otimes s,y \otimes t]=[x,y] \otimes st$, for all $x,y$ in $\g$ and $s,t$ in $\mathcal{S}$. 
\smallskip

We fix a basis $\{s_1,\ldots,s_m\}$ of $\mathcal{S}$, where $s_1=1$. Let $\rho:\g \to \gl(V)$ be a representation of $\g$ on a finite dimensional vector space $V$. Let $\bar{x}_1,\ldots,\bar{x}_p$ be in $\g \otimes \mathcal{S}$ and $\Lambda$ in $C^p(\g \otimes \mathcal{S};V \otimes \mathcal{S})$; as $\Lambda(\bar{x}_1,\ldots,\bar{x}_p)$ lies in $V \otimes \mathcal{S}$, by {\bf Prop. \ref{prop1}} we write $\Lambda(\bar{x}_1,\ldots,\bar{x}_p)$, as follows:
\begin{equation}\label{j1}
\Lambda(\bar{x}_1,\ldots,\bar{x}_p)=\Lambda_1(\bar{x}_1,\ldots,\bar{x}_p) \otimes s_1+\ldots+\Lambda_m(\bar{x}_1,\ldots,\bar{x}_p) \otimes s_m,
\end{equation}
where $\Lambda_j(\bar{x}_1,\cdots,\bar{x}_p)$ belongs to $V$ for all $j$. As $\Lambda$ is in $C^p(\g \otimes \mathcal{S};V \otimes \mathcal{S})$, the map $(\bar{x}_1,\ldots,\bar{x}_p) \mapsto \Lambda_j(\bar{x}_1,\ldots,\bar{x}_p)$ belongs to $C^p(\g \otimes \mathcal{S};V)$. We denote this map by $\Lambda_j$ for all $1 \leq j \leq m$.
\smallskip

Let $\{\omega_1,\ldots,\omega_m\} \subset \mathcal{S}^{\ast}$ be the dual basis of $\{s_1,\ldots,s_m\}$. For each $j$, the bilinear map $(v,s) \mapsto \omega_j(s)v$ yields the linear map $\widehat{\omega}_j:V \otimes \mathcal{S} \to V$, given by $\widehat{\omega}_j(v \otimes s)=\omega_j(s)\,v$.
\smallskip

For each $\!j$, consider the map $\chi_j\!\!:\!C(\g \!\otimes \!\mathcal{S};\!V \!\otimes \!\mathcal{S}) \!\!\to \!C(\g \!\otimes \!\mathcal{S};\!V)$ defined by:
\begin{equation}\label{chi}
\begin{split}
& \chi_j(\Lambda)=\widehat{\omega}_j \circ \Lambda,  \quad \text{ for } \Lambda \in C^p(\g \otimes \mathcal{S};V \otimes \mathcal{S}),\,\,p>0,\,\text{ and }\\
& \chi_j(v \otimes s)=\widehat{\omega}_j(v \otimes s), \quad \text{ for } v \in V\,\text{ and }s \in \mathcal{S}.
\end{split}
\end{equation}
In particular, for a given $\Lambda$ in $C^p(\g \otimes \mathcal{S};V \otimes \mathcal{S})$, where $p>0$, it follows:
\begin{equation}\label{widehat}
\begin{split}
& \text{ \emph{If} }\Lambda(\bar{x}_1,\ldots,\bar{x}_p)\!=\!\Lambda_1(\bar{x}_1,\ldots,\bar{x}_p)\! \otimes \!s_1\!+\ldots +\!\Lambda_m(\bar{x}_1,\ldots,\bar{x}_p) \!\otimes \!s_m,\\
& \,\text{ \emph{then} }\,\Lambda_j=\chi_j(\Lambda)=\widehat{\omega}_j \circ \Lambda\,\text{ for each }1 \leq j \leq m.
\end{split}
\end{equation}
Let $\mathcal{L}:C(\g \otimes \mathcal{S};V) \to C(\g;V)$ be the map defined by:
\begin{equation}\label{j2}
\begin{split}
& \mathcal{L}(v)=v, \text{ for all }v \in V,\,\text{ and }\\
& \mathcal{L}(\lambda)(x_1,\ldots,x_p)=\lambda(x_1 \otimes 1,\ldots,x_p \otimes 1),
\end{split}
\end{equation}
where $\lambda$ is in $C^p(\g \otimes \mathcal{S};V)$, and $x_1,\ldots,x_p$ are in $\g$. Let $\D$ be the differential in $C(\g \otimes \mathcal{S};V \otimes \mathcal{S})$ (see \eqref{Dif}). In the next result we will prove that $\D$, $\d$, $\mathcal{L}$ and $\chi_j$ can be inserted into a commutative diagram.
\begin{Prop}\label{prop L y d conmutan}{\sl
The following diagram is commutative:
\begin{equation}\label{L y d conmutan}
\xymatrix{
C^p(\g \otimes \mathcal{S};V \otimes \mathcal{S}) \ar[r]^{\chi_j} \ar[d]_{\D} & C^p(\g \otimes \mathcal{S};V) \ar[r]^{\mathcal{L}} & C^p(\g;V) \ar[d]^{\d}\\
C^{p+1}(\g \otimes \mathcal{S};V \otimes \mathcal{S}) \ar[r]^{\chi_j} & C^{p+1}(\g \otimes \mathcal{S};V) \ar[r]^{\mathcal{L}} & C^{p+1}(\g;V)
}
\end{equation}
In particular, for $p>0$ and $\Lambda$ in $C^p(\g \otimes \mathcal{S};V \otimes \mathcal{S})$ as in \eqref{widehat}:
\begin{equation}\label{j3}
\mathcal{L}\left((\D\Lambda)_j\right)=\d\mathcal{L}(\Lambda_j),\,\text{ for all }1 \leq j \leq m.
\end{equation}
}
\end{Prop}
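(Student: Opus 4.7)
The whole proposition reduces to verifying the identity \eqref{j3}, since the two horizontal compositions are just $\Lambda\mapsto\mathcal{L}(\chi_j(\Lambda))$ in the appropriate degrees. The strategy is to evaluate $\D\Lambda$ at the special tuple $(x_1\otimes 1,\ldots,x_{p+1}\otimes 1)$, apply $\widehat{\omega}_j$ term-by-term, and recognise the result as $\d\mathcal{L}(\Lambda_j)(x_1,\ldots,x_{p+1})$. This will work because two clean simplifications happen on this tuple: the bracket flattens as $[x_i\otimes 1,x_k\otimes 1]=[x_i,x_k]\otimes 1$ (using $1\cdot 1=1$), and the action $R(x\otimes 1)$ becomes a pure $\g$-action on the first tensor factor of $V\otimes\mathcal{S}$.

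The key technical observation, which I would isolate as a preliminary identity, is that $\widehat{\omega}_j$ intertwines these simplified actions: for every $x\in\g$ and every $w\in V\otimes\mathcal{S}$,
\begin{equation*}
\widehat{\omega}_j\bigl(R(x\otimes 1)(w)\bigr)=\rho(x)\bigl(\widehat{\omega}_j(w)\bigr).
\end{equation*}
This is immediate from \eqref{representacion current}: on a pure tensor $v\otimes s$ both sides equal $\omega_j(s)\rho(x)(v)$, and the identity extends by linearity via \textbf{Prop. \ref{prop1}}.

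With this in hand, I would write out $(\D\Lambda)(x_1\otimes 1,\ldots,x_{p+1}\otimes 1)$ using the Chevalley--Eilenberg formula \eqref{Dif} for the $(\g\otimes\mathcal{S})$-module $V\otimes\mathcal{S}$, and then apply $\widehat{\omega}_j$. The first sum produces terms $\widehat{\omega}_j\bigl(R(x_i\otimes 1)\Lambda(\ldots)\bigr)$, which by the intertwining identity become $\rho(x_i)\bigl(\widehat{\omega}_j\Lambda(\ldots)\bigr)=\rho(x_i)\bigl(\Lambda_j(x_1\otimes 1,\ldots,\widehat{x_i\otimes 1},\ldots)\bigr)=\rho(x_i)\bigl(\mathcal{L}(\Lambda_j)(x_1,\ldots,\widehat{x}_i,\ldots)\bigr)$. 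The second sum uses the flattened bracket to give $\widehat{\omega}_j\bigl(\Lambda([x_i,x_k]\otimes 1,x_1\otimes 1,\ldots)\bigr)=\mathcal{L}(\Lambda_j)([x_i,x_k],x_1,\ldots)$. Comparing with \eqref{Dif} applied to $\mathcal{L}(\Lambda_j)$ and the $\g$-module $V$, this is exactly $\d\mathcal{L}(\Lambda_j)(x_1,\ldots,x_{p+1})$, which proves \eqref{j3} and hence the commutativity of the right-hand square for $p>0$.

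The degree $p=0$ case must be treated separately and is short: writing $\Lambda=\sum_k v_k\otimes s_k\in V\otimes\mathcal{S}$, one has $\mathcal{L}\chi_j(\Lambda)=v_j$, so $\d\mathcal{L}\chi_j(\Lambda)(x)=\rho(x)(v_j)$; on the other side $\D\Lambda(x\otimes s)=\sum_k\rho(x)(v_k)\otimes s\,s_k$, and then $\mathcal{L}\chi_j(\D\Lambda)(x)=\widehat{\omega}_j\bigl(\sum_k\rho(x)(v_k)\otimes 1\cdot s_k\bigr)=\rho(x)(v_j)$ since $\omega_j(s_k)=\delta_{jk}$. I do not expect any real obstacle; the only subtlety to watch is the bookkeeping of the two distinct roles of the index~$j$ (component of $\mathcal{S}$ versus summation index in \eqref{Dif}), which I would resolve by renaming the dummy index in the Chevalley--Eilenberg sum before applying $\widehat{\omega}_j$.
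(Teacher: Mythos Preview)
Your proposal is correct and follows essentially the same route as the paper: evaluate $\D\Lambda$ at the tuple $(x_1\otimes 1,\ldots,x_{p+1}\otimes 1)$, use that $[x_i\otimes 1,x_k\otimes 1]=[x_i,x_k]\otimes 1$ and that $R(x\otimes 1)$ acts as $\rho(x)$ on the $V$-factor, and match with the Chevalley--Eilenberg formula for $\mathcal{L}(\Lambda_j)$. The only organisational difference is that you isolate the intertwining identity $\widehat{\omega}_j\circ R(x\otimes 1)=\rho(x)\circ\widehat{\omega}_j$ as a preliminary step and then apply $\widehat{\omega}_j$ to the full value of $\D\Lambda$, whereas the paper first decomposes $\Lambda=\sum_k\Lambda_k\otimes s_k$, pushes this through the $\D$-formula, and compares $s_j$-coefficients at the end; both unwind to the same computation.
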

\begin{proof}
First let us consider $p>0$ and $\Lambda$ in $C^p(\g \otimes \mathcal{S};V \otimes \mathcal{S})$ as in \eqref{widehat}. Applying \eqref{widehat} to $\D \Lambda$ we obtain: $(\D \Lambda)_j=\chi_j(\D \Lambda)$, for all $j$. Then \eqref{j3} holds if and only the diagram \eqref{L y d conmutan} is commutative, that is:
\begin{equation}\label{aclaracion}
\begin{split}
& \mathcal{L}\left((\D \Lambda)_j\right)=\mathcal{L}\left(\chi_j \circ \D \Lambda  \right)=\mathcal{L}\circ \chi_j \circ \D \Lambda,\,\text{ and }\\
& \d \left(\mathcal{L}(\Lambda_j)\right)=\d \mathcal{L}(\chi_j \circ \Lambda)=\d \circ \mathcal{L} \circ \chi_j(\Lambda).
\end{split}
\end{equation}
We shall prove that $\mathcal{L}\left((\D \Lambda)_j\right)=\d \left(\mathcal{L}(\Lambda_j)\right)$, for all $j$. Let $X_i=x_i \otimes 1$, where $x_i$ belongs to $\g$, for all $1 \leq i \leq p+1$. By \eqref{Dif} we have:
\begin{equation}\label{dl-11}
\begin{split}
& \D \Lambda(X_1,\ldots,X_{p+1})\!=\!\sum_{i=1}^{p+1}(-1)^{i-1}\!R(X_i)\!\left(\Lambda(X_1,\ldots,X_{\bar{i}},\ldots,X_{p+1})\right)\\
&+\sum_{i<k}(-1)^{i+k}\Lambda([X_i,X_k],X_1,\ldots,X_{\hat{i}},\ldots,X_{\hat{k}},\ldots,X_{p+1}).
\end{split}
\end{equation}
Applying \eqref{widehat} to $\Lambda$ in \eqref{dl-11} above, it follows:
\begin{equation}\label{dL1}
\begin{split}
&\D\Lambda(X_1,\ldots,X_{p+1})\\
&=\!\sum_{i=1}^{p+1}\!\sum_{j=1}^m(-1)^{i-1}R(X_i)\left(\Lambda_j(X_1,\ldots,X_{\bar{i}},\ldots,X_{p+1})\! \otimes\! s_j\right)\\
&+\!\sum_{i<k}\!\sum_{j=1}^m(-1)^{i+k}\Lambda_j([X_i,X_k],X_1,\ldots,X_{\bar{i}},\ldots,X_{\bar{k}},\ldots,X_{p+1})\!\otimes \!s_j
\end{split}
\end{equation}
Let us analyze each of the terms $R(X_i)\left(\Lambda_j(X_1,\ldots,X_{\bar{i}},\ldots,X_{p+1})\! \otimes\! s_j\right)$ and $\Lambda_j([X_i,X_k],X_1,\ldots,X_{\bar{i}},\ldots,X_{\bar{k}},\ldots,X_{p+1})\!\otimes \!s_j$, given in \eqref{dL1}. Applying the representation $R$ (see \eqref{representacion current}) and $\mathcal{L}$ (see \eqref{j2}), we obtain:
\begin{equation}\label{b1}
\begin{split}
& R(X_i)\left(\Lambda_j(X_1,\ldots,X_{\hat{i}},\ldots,X_{p+1}) \otimes s_j\right)\\
&=\rho(x_i)\left(\Lambda_j(X_1,\ldots,X_{\hat{i}},\ldots,X_{p+1})\right) \otimes s_j\\
&=\rho(x_i)\left( \mathcal{L}(\Lambda_j)(x_1,\ldots,x_{\hat{i}},\ldots,x_{p+1})\right)\otimes s_j\,
\end{split}
\end{equation}
as $\Lambda_j(X_1,\ldots,X_{\hat{i}},\ldots,X_{p+1})\!\!=\!\!\mathcal{L}(\Lambda_j)(x_1,\ldots,x_{\hat{i}},\ldots,x_{p+1})$. In addition,
\begin{equation}\label{b1-2}
\begin{split}
& \Lambda_j([X_i,X_k],X_1,\ldots,X_{\hat{i}},\ldots,X_{\hat{k}},\ldots,X_{p+1})\\
&=\mathcal{L}(\Lambda_j)([x_i,x_k],\ldots,x_{\hat{i}},\ldots,x_{\hat{k}},\ldots,x_{p+1}),\,\text{ for all }1 \leq j \leq m,
\end{split}
\end{equation}
as $[X_i,X_k]\!=\![x_i,x_k]\!\otimes \!1$. We substitute \eqref{b1}-\eqref{b1-2} in \eqref{dL1}, and we get:
\begin{equation}\label{NEQ1}
\begin{split}
& \D \Lambda(X_1,\ldots,X_{p+1})=\D \Lambda(x_1 \otimes 1,\ldots,x_p \otimes 1)\\
&=\sum_{i=1}^{p+1}\sum_{j=1}^m(-1)^{i-1}\rho(x_i)\left(\mathcal{L}(\Lambda_j)(x_1,\ldots,x_{\hat{i}},\ldots,x_{p+1})\right)\otimes s_j\\
&+\sum_{i<k}\sum_{j=1}^m(-1)^{i+k}\mathcal{L}(\Lambda_j)([x_i,x_k],\ldots,x_{\hat{i}},\ldots,x_{\hat{k}},\ldots,x_{p+1})\otimes s_j.
\end{split}
\end{equation}
In \eqref{NEQ1} we gather the terms corresponding at each $s_j$, to obtain:
\begin{equation}\label{b2}
\D \Lambda(x_1 \otimes 1,\ldots,x_p \otimes 1)=\sum_{j=1}^m \d\left( \mathcal{L}(\Lambda_j)\right)(x_1,\ldots,x_{p+1}) \otimes s_j.
\end{equation}
On the other hand, applying \eqref{widehat} to $\D \Lambda$, we also obtain:
\begin{equation}\label{b3}
\D \Lambda(x_1 \otimes 1,\ldots,x_p \otimes 1)=\sum_{j=1}^m \left(\D(\Lambda)\right)_j(x_1 \otimes 1,\ldots,x_p \otimes 1) \otimes s_j
\end{equation}
By \eqref{j2}, $ (\D\Lambda)_j(x_1 \otimes 1,\ldots,x_p \otimes 1)\!=\!\mathcal{L}\left((\D \Lambda)_j\right)(x_1,\ldots,x_p)$. Thus, from \eqref{b2} and \eqref{b3} we get $\mathcal{L}\left((\D\Lambda)_j\right)=\d\mathcal{L}(\Lambda_j)$, for each $j$. Hence by \eqref{aclaracion}, it follows that for $p>0$, the diagram \eqref{L y d conmutan} is commutative. 
\smallskip

Now we consider $p=0$. Let $x$ be in $\g$, $v$ in $V$ and $s$ in $\mathcal{S}$. Applying \eqref{widehat} to $\D(v \otimes s)(x \otimes 1)$, we get:
\begin{equation}\label{C1}
\D(v \otimes s)(x \otimes 1)=\D(v \otimes s)_1(x \otimes 1)\otimes s_1+\ldots+\D(v \otimes s)_m(x \otimes 1)\otimes s_m,
\end{equation}
where $\D(v \otimes s)_j=\chi_j(\D(v \otimes s))$ for all $j$. By definition of $\D$ and $R$ (see \eqref{representacion current} and \eqref{Dif}), it follows:
\begin{equation}\label{C2}
\begin{split}
\D(v \otimes s)(x \otimes 1)&=R(x \otimes 1)(v \otimes s)=\rho(x)(v) \otimes s\\
\,&=\omega_1(s)\rho(x)(v) \otimes s_1+\ldots+\omega_m(s)\rho(x)(v) \otimes s_m,
\end{split}
\end{equation} 
being that $s\!=\!\omega_1(s)s_1\!+\!\ldots\!+\!\omega_m(s)s_m$. Comparing \eqref{C1} and \eqref{C2} we get:
\begin{equation}\label{C3}
\omega_j(s)\rho(x)(v)=\D(v \otimes s)_j(x \otimes 1)=\chi_j(\D(v \otimes s))(x \otimes 1),\,\text{ for all }j.
\end{equation}
By definition of $\mathcal{L}$ (see \eqref{j2}) we obtain:
\begin{equation}\label{C4}
\chi_j(\D(v \otimes s))(x \otimes 1)=\mathcal{L}\left(\chi_j(\D(v \otimes s))\right)(x).
\end{equation}
As $\chi_j(v \otimes s)=\omega_j(s)v$ and $\mathcal{L}(v)=v$, then $\mathcal{L} \circ \chi_j(v \otimes s)=\omega_j(s)v$; hence,
\begin{equation}\label{C5}
\omega_j(s)\rho(x)(v)\!=\!\omega_j(s)\!\d(v)(x)\!=\!\d(\omega_j(s)(v))(x)\!=\!\d\!\left(\mathcal{L}(\chi_j(v \!\otimes \!s)\right)\!(x).
\end{equation}
By \eqref{C3}, \eqref{C4} and \eqref{C5} we get: $\mathcal{L}\left(\chi_j(\D(v\! \otimes \!s))\right)(x)\!=\!\d\!\left(\mathcal{L}(\chi_j(v\! \otimes \!s)\right)\!(x)$, then $\mathcal{L} \circ \chi_j \circ \D=\d \circ \mathcal{L} \circ \chi_j$, for all $j$, and diagram \eqref{L y d conmutan} is commutative.
\end{proof}

\section{The map $\mathcal{T}:\mathcal{C}(\g) \to \mathcal{C}(\g \otimes \mathcal{S})$}

Let $\mathcal{C}(\g)$ be the set of cochains of $\g$, that is: $\mathcal{C}(\g)=\{C(\g;V)\,\vert\,V \text{ is a }\g-\text{module}\}$. We define a map $\mathcal{T}:\mathcal{C}(\g) \to \mathcal{C}(\g \otimes \mathcal{S})$ by:
\begin{equation}\label{j4}
\begin{array}{rl}
\mathcal{T}(C^p(\g;V))&=C^p(\g \otimes \mathcal{S};V \otimes \mathcal{S}),\quad \,\text{ for }p >0,\,\text{ and }\\
\mathcal{T}(V)&=V \otimes \mathcal{S},\quad\,\text{ where }\,V \text{ is a }\g-\text{module}.
\end{array}
\end{equation}
From now on, we assume that $x,x_1,\ldots,x_{p+1}$ are in $\g$; $s,t,t_1,\ldots,t_{p+1}$ are in $\mathcal{S}$; $u$ is in $U$, $v$ is in $V$, $w$ is in $W$; $U,V,W$ are finite dimensional $\g$-modules and $p\!>\!0$. We also consider any cochain $\Lambda$ as in \eqref{widehat}.
\smallskip

For $t_1,\ldots,t_{p}$ in $\mathcal{S}$, we write $\bar{t}$ to denote the product $t_1 \cdots t_p$ in $\mathcal{S}$. 
\smallskip

Let $f:C^p(\g;V) \to C^p(\g;W)$ be a linear map. We define the linear map $\mathcal{T}(f):C^p(\g \otimes \mathcal{S};V \otimes \mathcal{S}) \to C^p(\g \otimes \mathcal{S};W \otimes \mathcal{S})$ by:
\begin{equation}\label{j5}
\mathcal{T}(f)(\Lambda)(x_1 \otimes t_1,\ldots,x_p \otimes t_p)=\sum_{j=1}^mf\left(\mathcal{L}(\Lambda_j)\right)(x_1,\ldots,x_p) \otimes s_j\, \bar{t},
\end{equation}
Let $f:V \to C^p(\g;W)$ be a linear map. We define the linear map $\mathcal{T}(f):V \otimes \mathcal{S} \to C^p(\g \otimes \mathcal{S};V \otimes \mathcal{S})$ by:
\begin{equation}\label{j6}
\mathcal{T}(f)(v \otimes s)(x_1 \otimes t_1,\ldots,x_p \otimes t_p)=f(v)(x_1,\ldots,x_p) \otimes s \,\bar{t},
\end{equation}
Let $f:C^p(\g;V) \to W$ be a linear map. We define the linear map $\mathcal{T}(f):C^p(\g \otimes \mathcal{S};V \otimes \mathcal{S}) \to W \otimes \mathcal{S}$ by:
\begin{equation}\label{j7}
\mathcal{T}(f)(\Lambda)=f(\mathcal{L}(\Lambda_1)) \otimes s_1+\ldots+f(\mathcal{L}(\Lambda_m)) \otimes s_m.
\end{equation}

Let $f\!:\!V \!\to \!W$ be a linear map, then $\mathcal{T}(f)\!:\!V \!\otimes \!\mathcal{S}\! \to \!W \!\otimes \!\mathcal{S}$ is the map: 
\begin{equation}\label{trivial}
\mathcal{T}(f)(v \otimes s)=(f \otimes \mathcal{S})(v \otimes s)=f(v) \otimes s
\end{equation}

\begin{Remark}\label{remark}{\rm 
If $\Id$ is the identity on $C^p(\g;V)$ then $\mathcal{T}(\Id)$ is not necessarily the identity on $C^p(\g \otimes \mathcal{S};V \otimes \mathcal{S})$. Indeed, let $\Lambda$ be in $C^p(\g \otimes \mathcal{S};V \otimes \mathcal{S})$. By definition of $\mathcal{L}$ and \eqref{j5} it follows:
\begin{equation}\label{notFunctor}
\begin{split}
& \mathcal{T}(\Id)(\Lambda)(x_1 \!\otimes \!t_1,\ldots,x_p \!\otimes \!t_p)\!=\!\sum_{j=1}^m \Id \left(\mathcal{L}(\Lambda_j)\right)(x_1,\ldots,x_p)\! \otimes \!s_j\, \bar{t}\\
\,&=\sum_{j=1}^m \mathcal{L}(\Lambda_j)(x_1,\ldots,x_p)\! \otimes\! s_j \bar{t}\!=\!\sum_{j=1}^m \Lambda_j(x_1 \!\otimes \!1,\ldots,x_p \!\otimes \!1) \!\otimes \!s_j\, \bar{t}.
\end{split}
\end{equation}
Then $\mathcal{T}(\Id)(\Lambda)=\Lambda$ if and only if 
\begin{equation}\label{ss1}
\Lambda(x_1 \otimes t_1,\ldots,x_p \otimes t_p)=\sum_{j=1}^m\Lambda_j(x_1 \otimes 1,\ldots,x_p \otimes 1) \otimes s_j\,\bar{t}.
\end{equation}
We will use this remark in the proof of \textbf{Prop. \ref{semisimple}}.}
\end{Remark}

In the next result we prove that $\mathcal{T}$ preserves the composition of maps. 

\begin{Prop}\label{composicion}{\sl
If $f:X \to Y$ and $g:Y \to Z$ are maps in $\mathcal{C}(\g)$, then $\mathcal{T}(g \circ f)=\mathcal{T}(g) \circ \mathcal{T}(f):\mathcal{T}(X) \to \mathcal{T}(Z)$ is a map in $\mathcal{C}(\g \otimes \mathcal{S})$.}
\end{Prop}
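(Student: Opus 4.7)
The plan is a case analysis by types. Each of $X$, $Y$, $Z$ is either a $\g$-module or a cochain space $C^p(\g;V)$ for some $p > 0$, and the appropriate one of the formulas \eqref{j5}--\eqref{trivial} defines $\mathcal{T}(f)$, $\mathcal{T}(g)$, and $\mathcal{T}(g \circ f)$. In each case I would unwind both sides of the claimed equality on a test input and confirm they agree.

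The technical key, to be set up first as a uniform lemma, is this: for any $f: X \to Y$ in $\mathcal{C}(\g)$ and any $x \in \mathcal{T}(X)$, the ``$j$-th piece'' of $\mathcal{T}(f)(x)$ equals $f$ applied to the ``$j$-th piece'' of $x$, where ``$j$-th piece'' means $\chi_j$ on elements of a module and $\mathcal{L} \circ \chi_j$ on elements of a cochain space. Concretely, when $Y$ is a cochain space this reads $\mathcal{L}(\mathcal{T}(f)(x)_j) = f(\mathcal{L}(\chi_j(x)))$ or $= f(\chi_j(x))$ according to whether $X$ is a cochain space or a module; when $Y$ is a module, the $\mathcal{L}$ on the left drops and the identity reads $\chi_j(\mathcal{T}(f)(x)) = f(\cdots)$ with the right side unchanged. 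Each case of the lemma follows by expanding $\mathcal{T}(f)(x)$ via the relevant formula, writing $s_j \bar{t} = \sum_k \omega_k(s_j \bar{t})\, s_k$ when needed, and evaluating at $(y_1 \otimes 1, \ldots, y_p \otimes 1)$ so that $\bar{t} = 1$ and $\omega_j(s_k) = \delta_{jk}$ collapse the sum.

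With the lemma in hand, each case reduces to a short substitution. The most representative instance, with $X = C^p(\g;V)$, $Y = C^p(\g;V')$, $Z = C^p(\g;W)$, runs as follows. Fix $\Lambda \in \mathcal{T}(X)$ and $(x_1 \otimes t_1, \ldots, x_p \otimes t_p)$. Applying \eqref{j5} to $g$ acting on $\mathcal{T}(f)(\Lambda)$ gives
\begin{equation*}
\mathcal{T}(g)\bigl(\mathcal{T}(f)(\Lambda)\bigr)(x_1 \otimes t_1, \ldots, x_p \otimes t_p) = \sum_{j=1}^m g\bigl(\mathcal{L}(\mathcal{T}(f)(\Lambda)_j)\bigr)(x_1, \ldots, x_p) \otimes s_j\,\bar{t}.
\end{equation*}
The lemma gives $\mathcal{L}(\mathcal{T}(f)(\Lambda)_j) = f(\mathcal{L}(\Lambda_j))$, so the right-hand side equals $\sum_j (g \circ f)(\mathcal{L}(\Lambda_j))(x_1, \ldots, x_p) \otimes s_j\,\bar{t}$, which by \eqref{j5} applied to $g \circ f$ is exactly $\mathcal{T}(g \circ f)(\Lambda)(x_1 \otimes t_1, \ldots, x_p \otimes t_p)$. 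The remaining seven type-combinations follow the same template, invoking \eqref{j6}, \eqref{j7}, or \eqref{trivial} as appropriate, and using linearity when the input to the outer $\mathcal{T}(g)$ lies in a tensor product $W \otimes \mathcal{S}$.

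The main obstacle is bookkeeping: keeping the case analysis organized and tracking at each step whether the ``$j$-th piece'' operation is $\chi_j$ alone or $\mathcal{L} \circ \chi_j$, and whether the output of $\mathcal{T}(f)$ lives in $W \otimes \mathcal{S}$ or in a cochain space. No single step is mathematically deep, and the uniform lemma absorbs essentially all of the computation.
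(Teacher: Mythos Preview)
Your proposal is correct and follows essentially the same approach as the paper: a case analysis over the types of $X$, $Y$, $Z$, with the core identity $\mathcal{L}\bigl(\mathcal{T}(f)(\Lambda)_j\bigr) = f\bigl(\mathcal{L}(\Lambda_j)\bigr)$ (and its variants) doing all the work. The only difference is organizational---you isolate this identity as a uniform lemma up front, whereas the paper establishes it inline within each claim---but the underlying computation and the representative case you spell out match the paper's Claim~1 line for line.
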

We shall verify that if $f:X \to Y$ and $g:Y \to Z$ are maps in $\mathcal{C}(\g)$, then $\mathcal{T}(g \circ f)$ is equal to $\mathcal{T}(g) \circ \mathcal{T}(f)$. Several cases should be considered and we only will prove two of them. In the \textbf{Appendix} we prove the remaining cases.

\begin{claim}{\sl
Let $f:C^p(\g;U) \to C^p(\g;V)$ and $g:C^p(\g;V) \to C^p(\g;W)$ be maps, then $\mathcal{T}(g \circ f)=\mathcal{T}(g) \circ \mathcal{T}(f)$ is a map between $C^p(\g \otimes \mathcal{S};U \otimes \mathcal{S})$ and $C^p(\g \otimes \mathcal{S};W \otimes \mathcal{S})$.
}
\end{claim}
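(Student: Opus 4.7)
The plan is to evaluate both maps at an arbitrary cochain $\Lambda$ in $C^p(\g \otimes \mathcal{S};U \otimes \mathcal{S})$ and arbitrary tensors $x_1 \otimes t_1,\ldots,x_p \otimes t_p$, and compare the results using \eqref{j5}. The direct side is easy: by \eqref{j5} applied to the composition $g \circ f$,
\begin{equation*}
\mathcal{T}(g \circ f)(\Lambda)(x_1 \otimes t_1,\ldots,x_p \otimes t_p)=\sum_{j=1}^m g\bigl(f(\mathcal{L}(\Lambda_j))\bigr)(x_1,\ldots,x_p) \otimes s_j\,\bar{t}.
\end{equation*}
For the composed side, I first apply $\mathcal{T}(f)$ and then $\mathcal{T}(g)$. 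Since the formula for $\mathcal{T}(g)(\mathcal{T}(f)(\Lambda))$ uses the components $\bigl(\mathcal{T}(f)(\Lambda)\bigr)_k$ via \eqref{j5}, the crux of the argument is to identify these components.

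The key technical step is to compute $\mathcal{L}\bigl((\mathcal{T}(f)(\Lambda))_k\bigr)$ for each $k$. To do this, I specialise \eqref{j5} at $t_1=\cdots=t_p=1$, noting that $\bar{t}=1$, to get
\begin{equation*}
\mathcal{T}(f)(\Lambda)(x_1 \otimes 1,\ldots,x_p \otimes 1)=\sum_{k=1}^m f(\mathcal{L}(\Lambda_k))(x_1,\ldots,x_p) \otimes s_k.
\end{equation*}
On the other hand, \eqref{widehat} applied to the cochain $\mathcal{T}(f)(\Lambda)$ gives the same element of $V \otimes \mathcal{S}$ as $\sum_{k=1}^m \bigl(\mathcal{T}(f)(\Lambda)\bigr)_k(x_1 \otimes 1,\ldots,x_p \otimes 1) \otimes s_k$. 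Since $\{s_1,\ldots,s_m\}$ is a basis of $\mathcal{S}$, the decomposition in \textbf{Prop. \ref{prop1}} is unique, and term-by-term comparison together with the definition of $\mathcal{L}$ in \eqref{j2} yields
\begin{equation*}
\mathcal{L}\bigl((\mathcal{T}(f)(\Lambda))_k\bigr)=f(\mathcal{L}(\Lambda_k)),\quad 1 \leq k \leq m.
\end{equation*}

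With this identity in hand, the proof concludes by a direct substitution: applying \eqref{j5} to $\mathcal{T}(g)$ evaluated on $\mathcal{T}(f)(\Lambda)$ gives
\begin{equation*}
\mathcal{T}(g)(\mathcal{T}(f)(\Lambda))(x_1 \otimes t_1,\ldots,x_p \otimes t_p)=\sum_{k=1}^m g\bigl(\mathcal{L}((\mathcal{T}(f)(\Lambda))_k)\bigr)(x_1,\ldots,x_p) \otimes s_k\,\bar{t},
\end{equation*}
and replacing $\mathcal{L}((\mathcal{T}(f)(\Lambda))_k)$ by $f(\mathcal{L}(\Lambda_k))$ produces exactly the expression found above for $\mathcal{T}(g \circ f)(\Lambda)$. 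Since the tensors $x_i \otimes t_i$ were arbitrary, this establishes $\mathcal{T}(g \circ f)=\mathcal{T}(g) \circ \mathcal{T}(f)$.

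I expect the main obstacle to be the bookkeeping in the identification of the components $\bigl(\mathcal{T}(f)(\Lambda)\bigr)_k$, because $\mathcal{T}(f)(\Lambda)$ is defined pointwise rather than in terms of its components, so one has to go through \eqref{widehat} and invoke uniqueness of the basis expansion. Once that identification is established, the rest is a formal manipulation of \eqref{j5}.
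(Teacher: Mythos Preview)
Your proof is correct and follows essentially the same approach as the paper: both arguments hinge on the identity $\mathcal{L}\bigl((\mathcal{T}(f)(\Lambda))_k\bigr)=f(\mathcal{L}(\Lambda_k))$, obtained by evaluating $\mathcal{T}(f)(\Lambda)$ at arguments of the form $x_i\otimes 1$ and reading off the $s_k$-component. The only cosmetic difference is that the paper extracts the component via the dual-basis map $\widehat{\omega}_j$ from \eqref{widehat} rather than by invoking uniqueness of the expansion in \textbf{Prop.~\ref{prop1}}, but these are the same operation.
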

\begin{proof}
Let $\Lambda$ be in $C^p(\g\! \otimes \!\mathcal{S};U\! \otimes \!\mathcal{S})$, and $\Theta\!=\!\mathcal{T}(f)(\Lambda)$. By \eqref{j5} we have:
\begin{equation}\label{expresion para mu}
\mathcal{T}(g)(\Theta)(x_1 \otimes t_1,\cdots,x_p \otimes t_p)=\sum_{j=1}^mg( \mathcal{L}(\Theta_j))(x_1,\ldots,x_p) \otimes s_j\, \bar{t},
\end{equation}
where $\Theta_j=\widehat{\omega}_j \circ \mathcal{T}(f)(\Lambda)$ (see \eqref{widehat}). We claim that $\mathcal{L}(\Theta_j)=f(\mathcal{L}(\Lambda_j))$. Indeed, using the definition of $\mathcal{L}$ and applying \eqref{j5} to $\mathcal{T}(f)(\Lambda)$, we get:
\begin{equation*}\label{expresion para L}
\begin{split}
&\mathcal{L}(\Theta_j)(x_1,\ldots,x_p)=\Theta_j(x_1 \otimes 1,\ldots,x_p \otimes 1)\\
&=\widehat{\omega}_j\circ \mathcal{T}(f)(\Lambda)(x_1 \otimes 1,\ldots,x_p \otimes 1)\\
&=\widehat{\omega}_j\left(\sum_{k=1}^m f \left(\mathcal{L}(\Lambda_k)\right)(x_1,\ldots,x_p) \otimes s_k\right)=f(\mathcal{L}(\Lambda_j))(x_1,\ldots,x_p).
\end{split}
\end{equation*}
Then $\mathcal{L}(\Theta_j)=f(\mathcal{L}(\Lambda_j))$, for all $j$. Substituting this in \eqref{expresion para mu}, we obtain: 
$$
\aligned
\mathcal{T}(g)\!\circ \!\mathcal{T}(f)(\Lambda)(x_1\! \otimes \!t_1,\!\ldots\!,x_p \!\otimes \!t_p)&=\!\!\sum_{j=1}^m (g \circ f)( \mathcal{L}(\Lambda_j))(x_1,\ldots,x_p) \!\otimes \!s_j\, \bar{t},\\
\,&=\mathcal{T}(g \circ f)(\Lambda)(x_1\! \otimes \!t_1,\cdots,x_p \!\otimes \!t_p).
\endaligned
$$
In the last step above we use \eqref{j5}. Thus $\mathcal{T}(g \circ f)\!=\!\mathcal{T}(g) \circ \mathcal{T}(f)$. 
\end{proof}

\begin{claim}{\sl
Let $f:U \to C^p(\g;V)$ and let $g:C^p(\g;V) \to W$ be maps. Then $\mathcal{T}(g \circ f)=\mathcal{T}(g) \circ \mathcal{T}(f)$ is a map between $U \otimes \mathcal{S}$ and $W \otimes \mathcal{S}$.
}
\end{claim}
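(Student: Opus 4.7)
The plan is to evaluate $\mathcal{T}(g)\circ \mathcal{T}(f)$ on a generator $u\otimes s$ of $U\otimes \mathcal{S}$ and show it agrees with $\mathcal{T}(g\circ f)(u\otimes s)$. Since $g\circ f:U\to W$ is a linear map between $\g$-modules, \eqref{trivial} gives the target value immediately: $\mathcal{T}(g\circ f)(u\otimes s)=g(f(u))\otimes s$. So the task reduces to computing the left-hand side.

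First I would set $\Theta:=\mathcal{T}(f)(u\otimes s)\in C^p(\g\otimes\mathcal{S};V\otimes \mathcal{S})$, whose value on $(x_1\otimes t_1,\ldots,x_p\otimes t_p)$ is $f(u)(x_1,\ldots,x_p)\otimes s\bar{t}$ by \eqref{j6}. To apply $\mathcal{T}(g)$ via \eqref{j7}, I need the components $\Theta_j=\chi_j\circ \Theta$ in the decomposition \eqref{widehat}, and more precisely $\mathcal{L}(\Theta_j)$. Expanding $s=\sum_{j=1}^m \omega_j(s)s_j$ and setting all $t_i=1$ gives
\begin{equation*}
\Theta(x_1\otimes 1,\ldots,x_p\otimes 1)=\sum_{j=1}^m \omega_j(s)\,f(u)(x_1,\ldots,x_p)\otimes s_j,
\end{equation*}
so that $\mathcal{L}(\Theta_j)=\omega_j(s)\,f(u)$ for each $j$, by definition of $\mathcal{L}$ and $\widehat{\omega}_j$.

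Plugging this into \eqref{j7} and using linearity of $g$, I would compute
\begin{equation*}
\mathcal{T}(g)(\Theta)=\sum_{j=1}^m g\bigl(\omega_j(s)f(u)\bigr)\otimes s_j=g(f(u))\otimes \sum_{j=1}^m\omega_j(s)s_j=g(f(u))\otimes s,
\end{equation*}
which matches $\mathcal{T}(g\circ f)(u\otimes s)$. Extending by linearity over $U\otimes \mathcal{S}$ finishes the argument.

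I do not expect a real obstacle here; the only subtlety is recognising that the constant $\omega_j(s)$ slips through the linear map $g$ and reassembles via $\sum_j\omega_j(s)s_j=s$. The calculation is genuinely parallel to the previous claim, with the auxiliary step of rewriting $s$ in the chosen basis playing the role that the sum over $j$ played before.
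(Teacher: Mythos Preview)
Your proof is correct and follows essentially the same route as the paper's own argument: both identify $\mathcal{T}(g\circ f)(u\otimes s)=g(f(u))\otimes s$ via \eqref{trivial}, set $\Theta=\mathcal{T}(f)(u\otimes s)$ (the paper writes $\Lambda$), compute $\mathcal{L}(\Theta_j)=\omega_j(s)f(u)$ by evaluating at arguments of the form $x_i\otimes 1$, and then apply \eqref{j7} together with linearity of $g$ and the identity $s=\sum_j\omega_j(s)s_j$ to conclude. The only cosmetic difference is notation.
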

\begin{proof}
Since $g \!\circ \!f$ is a map between $U$ and $W$, by \eqref{trivial}, $\mathcal{T}(g \!\circ \!f)$ is the map between $U\! \otimes \!\mathcal{S}$ and $W\!\otimes \!\mathcal{S}$ given by $\mathcal{T}(g \!\circ \!f)(u\! \otimes \!s)\!=\!g(f(u))\!\otimes \!s$.
\smallskip

Let $\Lambda=\mathcal{T}(f)(u \otimes s)$ which belongs to $C^p(\g \otimes \mathcal{S};V \otimes  \mathcal{S})$. By \eqref{j6}, $\Lambda(x_1 \otimes 1,\ldots,x_p \otimes 1)\!=\!f(u)(x_1,\ldots,x_p) \otimes s$. By definition of $\mathcal{L}$ we have:
$$
\aligned
& \mathcal{L}\left(\widehat{\omega}_j \circ \Lambda \right)(x_1,\ldots,x_p)=(\widehat{\omega}_j\circ \Lambda)(x_1 \otimes 1,\ldots,x_p \otimes 1)\\
\,&=\widehat{\omega}_j(\Lambda(x_1 \otimes 1,\ldots,x_p \otimes 1))=\omega_j(s)f(u)(x_1,\ldots,x_p),
\endaligned
$$
hence $\mathcal{L}(\widehat{\omega}_j\circ \Lambda)=\omega_j(s)f(u)$ for all $j$. Since $\Lambda_j=\widehat{\omega}_j\circ \Lambda$ and $s=\omega_1(s)s_1+\ldots+\omega_m(s)s_m$, from \eqref{j7} it follows:
\begin{equation}\label{W1}
\begin{split}
\mathcal{T}(g)(\Lambda)&=g(\mathcal{L}(\widehat{\omega}_1\circ\Lambda))\otimes s_1+\ldots+g(\mathcal{L}(\widehat{\omega}_j\circ\Lambda))\otimes s_m\\
\,&=\omega_1(s)g(f(u))\otimes s_1+\ldots+\omega_m(s)g(f(u)) \otimes s_m\\
\,&=g(f(u))\otimes s=\mathcal{T}(g \circ f)(u \otimes s),
\end{split}
\end{equation}
As $\Lambda=\mathcal{T}(f)(u \otimes s)$, from \eqref{W1} we deduce $\mathcal{T}(g)\circ \mathcal{T}(f)=\mathcal{T}(g \circ f)$.
\end{proof}

\begin{Prop}\label{map of complexes}{\sl
Let $f:C(\g;V) \to C(\g;W)$ be a map of complexes, that is $\d \circ f=f \circ \d$ and $f(C^k(\g;V))\subset C^k(\g;W)$ for all $k\geq 0$. Then $\mathcal{T}(f):C(\g \otimes \mathcal{S};V \otimes \mathcal{S}) \to C(\g \otimes \mathcal{S};W \otimes \mathcal{S})$ is a map of complexes.}
\end{Prop}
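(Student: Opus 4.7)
The plan is to verify two things: that $\mathcal{T}(f)$ preserves degrees, and that it intertwines the Lie cohomology differentials on source and target. Degree preservation is immediate from the definitions \eqref{j5} and \eqref{trivial}. The substantive content is the intertwining $\D \circ \mathcal{T}(f) = \mathcal{T}(f) \circ \D$, and I would handle it by splitting into the cases $p>0$ and $p=0$.

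For $p>0$, let $\Lambda \in C^p(\g \otimes \mathcal{S}; V \otimes \mathcal{S})$ and set $\Theta=\mathcal{T}(f)(\Lambda)$. Expand $\D\Theta(x_1 \otimes t_1,\ldots,x_{p+1}\otimes t_{p+1})$ directly from the differential formula \eqref{Dif}, substituting the explicit expression \eqref{j5} for $\Theta$ at each evaluation. Because $R(x_i\otimes t_i)$ multiplies the $\mathcal{S}$-slot by $t_i$ and $[x_i\otimes t_i,x_k\otimes t_k]=[x_i,x_k]\otimes t_it_k$, every term produced carries the common factor $\bar{t}=t_1\cdots t_{p+1}$. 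Regrouping and recognising the bracketed expression at each $s_j$-slot as the evaluation of the ordinary differential $\d$ on $f(\mathcal{L}(\Lambda_j))\in C^p(\g;W)$ yields
\[
\D\Theta(x_1\otimes t_1,\ldots,x_{p+1}\otimes t_{p+1})=\sum_{j=1}^{m}\d\bigl(f(\mathcal{L}(\Lambda_j))\bigr)(x_1,\ldots,x_{p+1})\otimes s_j\,\bar{t}.
\]
I would then apply the chain-map hypothesis $f\circ\d=\d\circ f$ and invoke \textbf{Prop. \ref{prop L y d conmutan}} to rewrite $\d\mathcal{L}(\Lambda_j)=\mathcal{L}((\D\Lambda)_j)$. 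The resulting sum is, by \eqref{j5} applied to the degree-$(p+1)$ component of $f$, precisely $\mathcal{T}(f)(\D\Lambda)$ evaluated at the same arguments.

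For $p=0$, $\mathcal{T}(f)$ is given by \eqref{trivial} and the chain-map hypothesis in degree zero says that $f:V\to W$ is a $\g$-module homomorphism. A direct evaluation shows $\D(f(v)\otimes s)(x\otimes t)=\rho(x)(f(v))\otimes ts=f(\rho(x)(v))\otimes ts$, while unpacking $\mathcal{T}(f)(\D(v\otimes s))$ in the style of equations \eqref{C1}--\eqref{C5} from the proof of \textbf{Prop. \ref{prop L y d conmutan}} produces the same value after writing $s=\sum_j\omega_j(s)s_j$. The main obstacle throughout is the bookkeeping of sums over the basis index $j$ and the common factor $\bar{t}$; once these are organised, the structural input reduces to the chain-map hypothesis combined with \textbf{Prop. \ref{prop L y d conmutan}}.
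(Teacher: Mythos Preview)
Your proposal is correct and follows essentially the same route as the paper's proof: expand $\D\mathcal{T}(f)(\Lambda)$ via \eqref{Dif} and \eqref{j5}, collect the common factor $\bar{t}$ to recognise $\d\bigl(f(\mathcal{L}(\Lambda_j))\bigr)$ at each $s_j$, then apply the chain-map hypothesis together with \eqref{j3} to obtain $\mathcal{T}(f)(\D\Lambda)$, treating $p=0$ separately in the same spirit. Your observation that the degree-zero hypothesis amounts to $f$ being a $\g$-module homomorphism is a mild streamlining of the paper's presentation, but otherwise the two arguments coincide.
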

\begin{proof} To shorten the length of the mathematical expressions, we will use the following notation:
\begin{equation}\label{notacion}
\begin{split}
& \textbf{x} \otimes \textbf{t}=(x_1 \otimes t_1,\ldots,x_{p+1} \otimes t_{p+1}),\\
& (\textbf{x} \otimes \textbf{t})_i=(x_1 \otimes t_1,\ldots,x_{\hat{i}} \otimes t_{\hat{i}},\ldots,x_{p+1} \otimes t_{p+1}),\\
& (\textbf{x} \otimes \textbf{t})_{i,j}=(x_1 \otimes t_1,\ldots,x_{\hat{i}} \otimes t_{\hat{i}},\ldots,x_{\hat{j}} \otimes t_{\hat{j}},\ldots,x_{p+1} \otimes t_{p+1}),\\
& \textbf{x}=(x_1,\ldots,x_{p+1}),\qquad \textbf{x}_{i}=(x_1,\ldots,x_{\hat{i}},\ldots,x_{p+1}),\\
& \textbf{x}_{i,j}=(x_1,\ldots,x_{\hat{i}},\ldots,x_{\hat{j}},\ldots,x_{p+1})
\end{split}
\end{equation}
Let $\Lambda$ be in $C^p(\g \otimes \mathcal{S};V \otimes \mathcal{S})$, $p>0$. We shall prove that $\D\circ \mathcal{T}(f)(\Lambda)=\mathcal{T}(f) \circ \D \Lambda$. Indeed, first we apply $\D$ to $\mathcal{T}(f)(\Lambda)$:
\begin{equation}\label{complex1}
\begin{split}
& \D\mathcal{T}(f)(\Lambda)(\textbf{x} \otimes \textbf{t})=\sum_{i=1}^{p+1}(-1)^{i-1}R(x_i \otimes t_i)\left(\mathcal{T}(f)(\Lambda)\right)((\textbf{x} \otimes \textbf{t})_i)\\
\,&+\sum_{i<j}(-1)^{i+j}\mathcal{T}(f)(\Lambda)\left([x_i \otimes t_i,x_j \otimes t_j],(\textbf{x} \otimes \textbf{t}\right)_{i,j}).
\end{split}
\end{equation}
We write $\textbf{A}$ and $\textbf{B}$, to denote the first and second term in \eqref{complex1}, that is: $\D\mathcal{T}(f)(\Lambda)(\textbf{x} \otimes \textbf{t})\!=\!\textbf{A}\!+\!\textbf{B}$. Applying \eqref{j5} to $\mathcal{T}(f)(\Lambda)$ in $\textbf{A}$, we obtain:
\begin{equation}\label{complex2}
\textbf{A}=\sum_{i=1}^{p+1}\sum_{k=1}^m (-1)^{i-1}R(x_i \otimes t_i)\left(f(\mathcal{L}(\Lambda_k))(\textbf{x}_i) \otimes s_k \,t^{i}\right),
\end{equation}
where $t^{i}=t_1 \cdots t_{i-1}t_{i+1}\cdots t_{p+1}$. In \eqref{complex2} above, we apply $R(x_i \otimes t_i)$ to $f(\mathcal{L}(\Lambda_k))(\textbf{x}_i) \otimes s_k \,t^{i}$ (see \eqref{representacion current}), and we get:
\begin{equation}\label{complex3}
\textbf{A}=\sum_{i=1}^{p+1}\sum_{k=1}^m (-1)^{i-1}\rho(x_i)\left(f\left(\mathcal{L}(\Lambda_k)\right)(\textbf{x}_i\right)\otimes s_k\, \bar{t},
\end{equation}
for $\bar{t}\!=\!t_i t^{i}$. Regarding to $\textbf{B}$, we fix $i\!<\!j$; by \eqref{j5} and \eqref{notacion} we have:
$$
\aligned
& \mathcal{T}(f)(\Lambda)\left([x_i \otimes t_i,x_j \otimes t_j],(\textbf{x}\otimes \textbf{t})_{i,j}\right)\\
&=\!\!\mathcal{T}(f)(\Lambda)\left([x_i,x_j]\otimes t_i t_j,x_1 \otimes t_1,\ldots,x_{\hat{i}} \otimes t_{\hat{i}},\ldots,x_{\hat{j}} \otimes t_{\hat{j}},\ldots,x_{p+1} \otimes t_{p+1}\right)\\
&=\!\!f(\mathcal{T}(\Lambda_1))\left([x_i,x_j],\textbf{x}_{i,j}\right)\otimes s_1\,\overline{t}+\ldots+f(\mathcal{T}(\Lambda_m))\left([x_i,x_j],\textbf{x}_{i,j}\right)\otimes s_m\,\overline{t}.
\endaligned
$$
for $\overline{t}=(t_i t_j)(t_1\cdots t_{\hat{i}}\cdots t_{\hat{j}}\cdots t_{p+1})$. Hence,
\begin{equation}\label{complex4}
\textbf{B}=\sum_{i<j}\sum_{k=1}^m(-1)^{i+j} f\left(\mathcal{L}(\Lambda_k)\right)([x_i,x_j],\textbf{x}_{i,j}) \otimes s_k\, \bar{t}.
\end{equation}
From \eqref{complex3} and \eqref{complex4} it follows:
\begin{equation}\label{complex5}
\D\mathcal{T}(f)(\Lambda)(\textbf{x} \otimes \textbf{t})=\textbf{A}+\textbf{B}=\sum_{k=1}^m \d\left(f\left(\mathcal{L}(\Lambda_k)\right)\right)(\textbf{x}) \otimes s_k \bar{t}.
\end{equation}
By hypothesis $f$ is a map of complex then $f \circ \d=\d \circ f$. By \eqref{j3}, $\d(\mathcal{L}(\Lambda_k))=\mathcal{L}((\D \Lambda)_k)$, then by \eqref{j5} and \eqref{complex5} we get:
\begin{equation*}\label{complex6}
\begin{split}
&\D\mathcal{T}(f)(\Lambda)(\textbf{x} \otimes \textbf{t})=\sum_{k=1}^m f(\d(\mathcal{L}(\Lambda_k)))(\textbf{x}) \otimes s_k\, \bar{t}\\
&=\sum_{k=1}^m f\left(\left(\mathcal{L}(\D\Lambda)_k\right)\right)(\textbf{x}) \otimes s_k\, \bar{t}=\mathcal{T}(f)(\D\Lambda)(\textbf{x} \otimes \textbf{t}).
\end{split}
\end{equation*}
As $\Lambda$ is arbitrary, it follows $\D\circ \mathcal{T}(f)\!=\!\mathcal{T}(f) \circ \D$. 
\smallskip

We shall prove that $\D$ and $\mathcal{T}(f)$ commute for $p=0$. In this case, $f:V \to W$ and $T(f)=f \otimes \mathcal{S}$ (see \eqref{trivial}). We claim that: 
$$
\mathcal{T}(f) \circ \D(v \otimes s)(x \otimes t)=\D \circ \mathcal{T}(f)(v \otimes s)(x \otimes t).
$$ 
Indeed, we first apply $\mathcal{T}(f)$ to $\D(v \otimes s)$ (see \eqref{j5}):
\begin{equation}\label{K1}
\mathcal{T}(f)(\D(v\! \otimes \!s))(x \!\otimes \!t)\!=\!\!\sum_{j=1}^m f\left(\mathcal{L}(\D (v \!\otimes \!s)_j) \right)(x)\! \otimes \!s_j\,t.
\end{equation}
Due to $\D(v \otimes s)_j=\chi_j\circ \D(v \otimes s)$ and  $\mathcal{L} \circ \chi_j \circ \D=\d \circ \mathcal{L} \circ \chi_j$ (see \textbf{Prop. \ref{prop L y d conmutan}}), it follows: 
\begin{equation}\label{h1}
\begin{split}
&\sum_{j=1}^m f\left(\mathcal{L}(\D (v \!\otimes \!s)_j) \right)(x)\! \otimes \!s_j\,t=\!\sum_{j=1}^m f\left(\mathcal{L}\!\circ \!\chi_j\!\circ \!\D (v \!\otimes \!s) \right)(x) \!\otimes \!s_jt\\
&=\!\sum_{j=1}^m f\left(\d \circ \mathcal{L} \circ \chi_j (v \!\otimes \!s) \right)(x)\!\otimes \!s_jt
\end{split}
\end{equation}
As $\chi_j(v \otimes s)=\omega_j(s)v$ and $\mathcal{L}(v)=v$, then $\mathcal{L}\circ \chi_j(v \!\otimes \!s)\!=\!\omega_j(s)v$. Hence, $f(\d \circ \mathcal{L}\circ \chi_j(v\! \otimes\! s))\!=\!\omega_j(s)f(\d(v))$. Substituting this in \eqref{h1} and using that $f$ and $\d$ commute, as well as $s=\omega_1(s)s_1+\ldots+\omega_m(s)s_m$, we get:
\begin{equation}\label{h2}
\begin{split}
& \mathcal{T}(f)\circ \D(v \otimes s)(x \otimes t)\\
&=\omega_1(s)\d f(v)(x)\otimes s_1t+\ldots+\omega_m(s)\d f(v)(x)\otimes s_mt.\\
&=(\d f(v))(x) \otimes s\,t.
\end{split}
\end{equation}
Let $\rho_W$ be the representation of $\g$ on $W$. Since $\d f(v)(x)=\rho_W(x)(f(v))$ and $\D(f(v) \otimes s)(x \otimes t)=\rho_W(x)(f(v)) \otimes s\,t$, from \eqref{h2} it follows:
\begin{equation}\label{h3}
\begin{split}
&\mathcal{T}(f)\circ \D(v \otimes s)(x \otimes t)=\rho_W(x)(f(v))\otimes s\,t\\
&=\D (f(v)\otimes s)(x \otimes t)=\D \left(\mathcal{T}(f)(v \otimes s)\right)(x \otimes t),
\end{split}
\end{equation}
because $\mathcal{T}(f)(v \otimes s)=f(v) \otimes s$ (see \eqref{trivial}). Hence, $\mathcal{T}(f)\circ \D=\D \circ \mathcal{T}(f)$.
\end{proof}

\section{The map $\alpha:\mathcal{H}(\g \otimes \mathcal{S};V \otimes \mathcal{S})\to \mathcal{H}(\g ;V) \otimes \mathcal{S}$}

Let $V$ be a $\g$-module. We denote the group of cocycles and coboundaries of $C(\g;V)$, by $\mathcal{Z}$ and $\mathcal{B}$, respectively. The cohomology group of $\g$ with coefficients in $V$ is denoted by $\mathcal{H}(\g;V)$. The quotient $C(\g;V)/\mathcal{B}$ is denoted by $\mathcal{Z}^{\prime}$ and $C(\g;V)/\mathcal{Z}$ is denoted by $\mathcal{B}^{\prime}$. We assume that $\mathcal{Z},\mathcal{B},\mathcal{Z}^{\prime},\mathcal{B}^{\prime}, \mathcal{H}(\g;V)$ and $V$, have zero differential. By \cite{Che} (Chapter IV, \S 23), $\mathcal{Z}$ and $\mathcal{B}$ are $\g$-modules, then $\mathcal{Z}^{\prime},\mathcal{B}^{\prime}$ and $\mathcal{H}(\g;V)$ are $\g$-modules with zero differential. 
\smallskip

Let $\iota:\mathcal{Z}\to C(\g;V)$, be the inclusion. By \eqref{j6} we get a map $\mathcal{T}(\iota):\mathcal{Z} \otimes \mathcal{S}\to C(\g \otimes \mathcal{S};V \otimes \mathcal{S})$. Due to $\mathcal{Z}\otimes \mathcal{S}$ has zero differential, there is a map $\Phi$ from $\mathcal{Z} \otimes \mathcal{S}$ into $\mathcal{H}(\g \otimes \mathcal{S};V \otimes \mathcal{S})$, induced by $\mathcal{T}(\iota)$:
\begin{equation}\label{phi}
\begin{array}{rccl}
\Phi:& \mathcal{Z}\otimes \mathcal{S}& \longrightarrow & \mathcal{H}(\g \otimes \mathcal{S};V \otimes \mathcal{S})\\
& \bar{x} & \mapsto & \mathcal{T}(\iota)(\bar{x})+\mathcal{B}(\g \otimes \mathcal{S};V \otimes \mathcal{S}).
\end{array}
\end{equation} 
Consider $\pi^{\prime}:C(\g;V)\to \mathcal{Z}^{\prime}$, defined by $\pi^{\prime}(\lambda)=\lambda+\mathcal{B}$. By \eqref{j7}, we get a map $\mathcal{T}(\pi^{\prime}):C(\g \otimes \mathcal{S};V \otimes \mathcal{S})\to \mathcal{Z}^{\prime} \otimes \mathcal{S}$. Due to $\mathcal{Z}^{\prime} \otimes \mathcal{S}$ has zero differential, we have a map $\Psi$ from $\mathcal{H}(\g \otimes \mathcal{S};V \otimes \mathcal{S})$ into $\mathcal{Z}^{\prime} \otimes \mathcal{S}$, induced by $\mathcal{T}(\pi^{\prime})$:
\begin{equation}\label{psi}
\begin{array}{rccl}
\Psi:&\!\!\!\!\!\!\!\!\!\!\!\!\!\mathcal{H}(\g \!\otimes \!\mathcal{S};V \!\otimes \!\mathcal{S})\!\!\!\!\!& \!\!\!\!\!\!\!\!\!\!\longrightarrow & \mathcal{Z}^{\prime}\! \otimes \!\mathcal{S}\\
& \Lambda+\mathcal{B}(\g \!\otimes \!\mathcal{S};V \!\otimes \!\mathcal{S}) & \mapsto\!\!\!\! & \mathcal{T}(\pi^{\prime})(\Lambda)\!\!=\!\!\displaystyle{\sum_{j=1}^m}\left(\mathcal{L}(\Lambda_j)+\mathcal{B}\right)\!\otimes \!s_j.
\end{array}
\end{equation} 
We shall prove that $\Psi$ is well-defined. Let $\sigma$ be in $C(\g \otimes \mathcal{S};V \otimes \mathcal{S})$. Using \eqref{j3} and \eqref{j7}, as well as $\pi^{\prime}\circ \d=0$, we obtain:
$$
\aligned
\mathcal{T}(\pi^{\prime})(\D\sigma)&=\pi^{\prime}\left(\mathcal{L}((\D\sigma)_1)\right) \otimes s_1+\ldots+\pi^{\prime}\left(\mathcal{L}((\D\sigma)_m)\right) \otimes s_m\\
\,&=\pi^{\prime}\left(\d\mathcal{L}(\sigma_1)\right) \otimes s_1+\ldots+\pi^{\prime}\left(\d\mathcal{L}(\sigma_m)\right) \otimes s_m=0,
\endaligned
$$
Then $\mathcal{B}(\g \otimes \mathcal{S};V \otimes \mathcal{S}) \subset \Ker(\mathcal{T}(\pi^{\prime}))$, hence $\Psi$ is well-defined.
\smallskip

Let $\pi:\mathcal{Z}\to \mathcal{H}(\g;V)$ be the projection and $\iota^{\prime}:\mathcal{H}(\g;V)\to \mathcal{Z}^{\prime}$ be the inclusion. In the next result we will prove that there exists a surjective linear map $\alpha$ between $\mathcal{H}(\g \otimes \mathcal{S};V \otimes \mathcal{S})$ and $\mathcal{H}(\g;V) \otimes \mathcal{S}$.

\begin{Prop}\label{morfismo natural}{\sl
Let $\g \otimes \mathcal{S}$ be the current Lie algebra of $\g$.
\smallskip

\textbf{(i)} For any $\g$-module $V$, there exists a unique surjective linear map $\alpha\!:\!\!\mathcal{H}(\g \otimes \mathcal{S};V \otimes \mathcal{S})\!\to \!\mathcal{H}(\g;V) \otimes \mathcal{S}$, that makes commutative the diagram:
\begin{equation}\label{propiedad de alpha}
\xymatrix@R+1pc@C+2pc{
\mathcal{Z} \otimes \mathcal{S} \ar[r]^{\pi \otimes \mathcal{S}} \ar[d]_{\Phi} & \mathcal{H}(\g;V) \otimes \mathcal{S} \ar[d]^{\iota^{\prime} \otimes \mathcal{S}}\\
\mathcal{H}(\g \otimes \mathcal{S};V \otimes \mathcal{S}) \ar[r]^{\Psi} \ar[ur]^{\alpha} & \mathcal{Z}^{\prime} \otimes \mathcal{S}
}
\end{equation}

\textbf{(ii)} Let $f:C(\g;V) \to C(\g;W)$ be a map of complexes and consider $\mathcal{H}(f)\!:\mathcal{H}(\g;V) \!\to \!\mathcal{H}(\g;W)$ the map induced by $f$; that is $\mathcal{H}(f)(\lambda+\mathcal{B})=f(\lambda)+\mathcal{B}$. Then the following diagram is commutative:
\begin{equation}\label{u3}
\xymatrix@C+2.5pc{
\mathcal{H}(\g \otimes \mathcal{S};V \otimes \mathcal{S}) \ar[r]^{\mathcal{H}\left(\mathcal{T}(f)\right)} \ar[d]_{\alpha_V} & \mathcal{H}(\g \otimes \mathcal{S};W \otimes \mathcal{S}) \ar[d]^{\alpha_W}\\
\mathcal{H}(\g;V) \otimes \mathcal{S} \ar[r]^{\mathcal{H}(f) \otimes \mathcal{S}=\mathcal{T}(\mathcal{H}(f))} & \mathcal{H}(\g;W) \otimes \mathcal{S}
}
\end{equation}
where $\mathcal{H}(\mathcal{T}(f))$ is the map induced by $\mathcal{T}(f)$.}
\end{Prop}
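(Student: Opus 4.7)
The plan is to construct $\alpha$ directly by the explicit formula
\[
\alpha\bigl([\Lambda]\bigr) \;=\; \sum_{j=1}^m [\mathcal{L}(\Lambda_j)] \otimes s_j,
\]
and to read off all the required properties (well-definedness, commutativity of both triangles, uniqueness, and surjectivity) from this formula together with \textbf{Prop. \ref{prop L y d conmutan}}. This definition is motivated directly by the shape of $\Psi$ in \eqref{psi}.

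First I would verify that $\alpha$ descends to cohomology. The identity $\mathcal{L}((\D\Lambda)_j) = \d\mathcal{L}(\Lambda_j)$ from \eqref{j3} immediately shows that if $\D\Lambda = 0$ then each $\mathcal{L}(\Lambda_j)$ is a cocycle of $\g$, and if $\Lambda = \D\Theta$ then each $\mathcal{L}(\Lambda_j) = \d\mathcal{L}(\Theta_j)$ is a coboundary; hence the formula passes to cohomology. The lower triangle $(\iota' \otimes \mathcal{S}) \circ \alpha = \Psi$ is then immediate by comparison with \eqref{psi}. For the upper triangle $\alpha \circ \Phi = \pi \otimes \mathcal{S}$, take $\bar{x} = \sum_k z_k \otimes s_k$ with $z_k \in \mathcal{Z}$; applying \eqref{j6}, then $\widehat{\omega}_j$, and using $\omega_j(s_k) = \delta_{jk}$, one gets $\mathcal{L}(\mathcal{T}(\iota)(\bar{x})_j) = z_j$, so $\alpha(\Phi(\bar{x})) = \sum_j [z_j] \otimes s_j = (\pi \otimes \mathcal{S})(\bar{x})$.

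Uniqueness of $\alpha$ is automatic: since $\iota'$ is injective, so is $\iota' \otimes \mathcal{S}$, and $\alpha$ is forced by the lower triangle. Surjectivity follows from the upper triangle together with surjectivity of $\pi \otimes \mathcal{S}$: every generator $[z] \otimes s_j$ of $\mathcal{H}(\g;V) \otimes \mathcal{S}$ equals $\alpha(\Phi(z \otimes s_j))$. None of these steps presents any real obstacle; the only ingredient beyond direct computation is \textbf{Prop. \ref{prop L y d conmutan}}.

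For part \textbf{(ii)}, by the uniqueness proved in (i) (applied to $W$), it suffices to check that $\alpha_W \circ \mathcal{H}(\mathcal{T}(f))$ and $(\mathcal{H}(f) \otimes \mathcal{S}) \circ \alpha_V$ agree after composition with the injection $\iota'_W \otimes \mathcal{S}$; equivalently, I would evaluate both compositions on an arbitrary class $[\Lambda]$. The crucial identity is $\mathcal{L}(\mathcal{T}(f)(\Lambda)_j) = f(\mathcal{L}(\Lambda_j))$, which was already established in the first Claim of the proof of \textbf{Prop. \ref{composicion}}. Granting this, one obtains
\[
\alpha_W\bigl(\mathcal{H}(\mathcal{T}(f))([\Lambda])\bigr) \;=\; \sum_{j=1}^m [f(\mathcal{L}(\Lambda_j))] \otimes s_j \;=\; (\mathcal{H}(f) \otimes \mathcal{S})\bigl(\alpha_V([\Lambda])\bigr),
\]
which is the desired commutativity. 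The only prerequisite is that $\mathcal{T}(f)$ be a map of complexes, so that $\mathcal{H}(\mathcal{T}(f))$ is defined, and this is exactly \textbf{Prop. \ref{map of complexes}}.
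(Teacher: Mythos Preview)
Your proposal is correct and covers all the required points. The main difference from the paper is one of \emph{order and style}: the paper first constructs $\alpha$ abstractly by a diagram chase --- it introduces the auxiliary projection $\zeta:\mathcal{Z}'\to\mathcal{B}'$, proves the claim $(\zeta\otimes\mathcal{S})\circ\Psi=0$ so that $\operatorname{Im}(\Psi)\subset\operatorname{Im}(\iota'\otimes\mathcal{S})$, and then \emph{defines} $\alpha$ as the unique lift through the injective map $\iota'\otimes\mathcal{S}$. Only afterwards does the paper read off the explicit formula \eqref{explicita alfa}. You reverse this: you posit the explicit formula from the outset and verify directly that it is well defined on cohomology and makes both triangles commute. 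Your route is shorter and avoids the extra exact sequence $0\to\mathcal{H}(\g;V)\to\mathcal{Z}'\to\mathcal{B}'\to 0$ altogether; the paper's route is more categorical and makes the \emph{reason} $\alpha$ exists (factorisation through an injection) visible before any computation. Uniqueness and surjectivity are argued identically in both approaches.

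For part \textbf{(ii)} the contrast is similar: the paper works at the level of $\Psi$ and the induced map $f':\mathcal{Z}'(\g;V)\to\mathcal{Z}'(\g;W)$, using $\Psi\circ\mathcal{H}(\mathcal{T}(f))=\mathcal{T}(f')\circ\Psi$ together with $\iota'\circ\mathcal{H}(f)=f'\circ\iota'$ and the functoriality of $\mathcal{T}$ (\textbf{Prop.~\ref{composicion}}), then cancels the injective $\iota'\otimes\mathcal{S}$. You instead plug the explicit formula for $\alpha$ into both sides and invoke the identity $\mathcal{L}(\mathcal{T}(f)(\Lambda)_j)=f(\mathcal{L}(\Lambda_j))$ from the first Claim in \textbf{Prop.~\ref{composicion}}. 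Both arguments rest on the same ingredients (\textbf{Prop.~\ref{prop L y d conmutan}}, \textbf{Prop.~\ref{composicion}}, \textbf{Prop.~\ref{map of complexes}}); yours is a direct computation, the paper's a short diagram chase.
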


\begin{proof}

\textbf{(i)} First we shall prove the following.

\begin{claim}\label{T1}{\sl
For $p=0$, $\mathcal{H}^0(\g \otimes \mathcal{S};V \otimes \mathcal{S})=\mathcal{H}^0(\g;V) \otimes \mathcal{S}$.}
\end{claim}
\begin{proof}
Observe that for $p=0$, $\mathcal{Z}^0=\mathcal{H}^0(\g;V)$. Consider $\bar{v}$ in $\mathcal{H}^0(\g \otimes \mathcal{S};V \otimes \mathcal{S}) \subset V \otimes \mathcal{S}$. By \textbf{Prop. \ref{prop1}} we write $\bar{v}=v_1 \otimes s_1+\ldots+v_m \otimes s_m$, where $v_j$ belongs to $V$, thus:
\begin{equation}\label{F1}
\begin{split}
0=\D(\bar{v})(x \otimes 1)&=R(x \otimes 1)(\bar{v})\\
&=\rho(x)(v_1) \otimes s_1+\ldots+\rho(x)(v_m)\otimes s_m\\
&=\d(v_1)(x) \otimes s_1+\ldots+\d(v_m)(x) \otimes s_m.
\end{split}
\end{equation}
Then $\d(v_j)=0$, and $v_j$ belongs to $\mathcal{H}^0(\g;V)$ for all $j$, which implies that $\bar{v}$ belongs to $\mathcal{H}^0(\g;V) \otimes \mathcal{S}$. Hence $\mathcal{H}^0(\g \otimes \mathcal{S};V \otimes \mathcal{S}) \subset \mathcal{H}^0(\g;V) \otimes \mathcal{S}$.
\smallskip

Let $\bar{v}$ be in $\mathcal{H}^0(\g;V) \otimes \mathcal{S} \subset V \otimes \mathcal{S}$. By \textbf{Prop. \ref{prop1}}, there are $v_1,\ldots,v_m$ in $\mathcal{H}^0(\g;V)$ such that $\bar{v}=v_1 \otimes s_1+\ldots+v_m \otimes s_m$. As each $v_j$ belongs to $\mathcal{H}^0(\g;v)$, then $\d(v_j)=0$. Thus,
$$
\aligned
& \D(\bar{v})(x \otimes s)=R(x \otimes s)(\bar{v})=\sum_{j=1}^m R(x \otimes s)(v_j \otimes s_j)\\
&=\sum_{j=1}^m \rho(x)(v_j) \otimes s s_j=\sum_{j=1}^m \d(v_j)(x) \otimes s s_j=0.
\endaligned
$$
Hence $\D(\bar{v})=0$, and $\bar{v}$ is in $\mathcal{H}^0(\g \otimes \mathcal{S};V \otimes \mathcal{S})$. Thus $\mathcal{H}^0(\g \otimes \mathcal{S};V \otimes \mathcal{S})=\mathcal{H}^0(\g;V) \otimes \mathcal{S}$.  
\end{proof}

Let $\eta:\mathcal{B}\to \mathcal{Z}$ be the inclusion map and $\zeta:\mathcal{Z}^{\prime}\to \mathcal{B}^{\prime}$ be the map defined by $\zeta(\lambda+\mathcal{B})=\lambda+\mathcal{Z}$. We have the following short exact sequences:
$$
\aligned
& \xymatrix{
0 \ar[r] & \mathcal{H}(\g;V) \ar[r]^<(.3){\iota^{\prime}} & \mathcal{Z}^{\prime} \ar[r]^{\zeta} & \mathcal{B}^{\prime} \ar[r] & 0
}\\
& \xymatrix{
0 \ar[r] & \mathcal{B} \ar[r]^{\eta} & \mathcal{Z} \ar[r]^<(.25){\pi} & \mathcal{H}(\g;V) \ar[r] & 0
}
\endaligned
$$
Since $\mathcal{S}$ is finite dimensional, the following sequences are exact:
$$
\aligned
& \xymatrix{
0 \ar[r] & \mathcal{H}(\g;V)\otimes \mathcal{S} \ar[r]^<(.3){\iota^{\prime}\otimes \mathcal{S}} & \mathcal{Z}^{\prime} \otimes \mathcal{S} \ar[r]^{\zeta\otimes \mathcal{S}} & \mathcal{B}^{\prime}\otimes \mathcal{S} \ar[r] & 0
}\\
& \xymatrix{
0 \ar[r] & \mathcal{B} \otimes \mathcal{S} \ar[r]^{\eta \otimes \mathcal{S}} & \mathcal{Z}\otimes \mathcal{S} \ar[r]^<(.2){\pi\otimes \mathcal{S}} & \mathcal{H}(\g;V) \otimes \mathcal{S}\ar[r] & 0
}
\endaligned
$$
Thus $\pi \otimes \mathcal{S}$ is surjective and $\iota^{\prime} \otimes \mathcal{S}$ is injective. Due to $\iota^{\prime} \circ \pi=\pi^{\prime} \circ \iota$, by \textbf{Prop. \ref{composicion}} we obtain the commutative diagram:
\begin{equation}\label{1x1}
\xymatrix@C+2pc{
\mathcal{Z} \otimes \mathcal{S} \ar[r]^{\pi \otimes \mathcal{S}} \ar[d]_{\mathcal{T}(\iota)} & \mathcal{H}(\g;V) \otimes \mathcal{S} \ar[d]^{\iota^{\prime} \otimes \mathcal{S}}\\
C(\g \otimes \mathcal{S};V \otimes \mathcal{S}) \ar[r]^>(.75){\mathcal{T}(\pi^{\prime})} & \mathcal{Z}^{\prime} \otimes \mathcal{S}
}
\end{equation}
By \eqref{phi} and \eqref{psi} we have the commutative diagram:
\begin{equation}\label{2x}
\xymatrix@C+2pc{
\,& 0 \ar[d]\\
\mathcal{Z} \otimes \mathcal{S} \ar[r]^{\pi \otimes \mathcal{S}} \ar[d]_{\Phi} & \mathcal{H}(\g;V) \otimes \mathcal{S} \ar[d]^{\iota^{\prime} \otimes \mathcal{S}} \ar[r] & 0\\
\mathcal{H}(\g \otimes \mathcal{S};V \otimes \mathcal{S}) \ar[r]^{\Psi}  & \mathcal{Z}^{\prime} \otimes \mathcal{S}
}
\end{equation}

If $\alpha$ and $\alpha^{\prime}$ make commutative the diagram \eqref{2x}, then $\left(\iota^{\prime} \otimes \mathcal{S}\right) \circ \alpha=\left(\iota^{\prime} \otimes \mathcal{S}\right) \circ \alpha^{\prime}$. As $\iota^{\prime} \otimes \mathcal{S}$ is injective, it follows $\alpha=\alpha^{\prime}$. 
\smallskip

If $\operatorname{Im}(\Psi) \subset \operatorname{Im}\left(\iota^{\prime} \otimes \mathcal{S}\right)=\operatorname{Ker}\left(\zeta \otimes \mathcal{S}\right)$, then there exists a map $\alpha$ between $\mathcal{H}(\g \otimes \mathcal{S};V \otimes \mathcal{S})$ and $\mathcal{H}(\g;V) \otimes \mathcal{S}$. We will prove this affirmation.

\begin{claim}\label{nueva claim}{\sl 
The composition $(\zeta \otimes \mathcal{S}) \circ \Psi$ is zero.}
\end{claim}
\begin{proof}
For $p=0$, $T(\iota)=\iota \otimes \mathcal{S}$, as $\iota:\mathcal{Z} \to C(\g;V)$ and $V=C^0(\g;V)$ (see \eqref{trivial}). Then,
$$
\Im(\mathcal{T}(\iota))=\Im(\iota \otimes \mathcal{S})=\mathcal{Z}^0 \otimes \mathcal{S}=\mathcal{H}^0(\g;V)\otimes \mathcal{S}=\mathcal{H}^0(\g \otimes \mathcal{S};V \otimes \mathcal{S}).
$$
Therefore by \eqref{phi}, $\Phi^0:\mathcal{Z}^0 \otimes \mathcal{S}\to \mathcal{H}^0(\g \otimes \mathcal{S};V \otimes \mathcal{S})$ is the identity.
\smallskip

Similarly, for $p=0$, $\pi^{\prime}:C(\g;V) \to \mathcal{Z}^{\prime}$ is the identity, for $\mathcal{B}^0=\{0\}$ and ${\mathcal{Z}^{\prime}}^0\!\!=\!V$. By \eqref{trivial}, $\mathcal{T}(\pi^{\prime})=\Id_V \otimes \mathcal{S}=\Id_{V \otimes \mathcal{S}}$ is the identity on $V \otimes \mathcal{S}$. Then $\Psi^0:\!\mathcal{H}^0(\g \otimes \mathcal{S};V \otimes \mathcal{S})\!\to \!{\mathcal{Z}^{\prime}}^0 \otimes \mathcal{S}$ is the inclusion (see \eqref{psi}). 
\smallskip

In addition, for $p=0$ the map $\zeta:\mathcal{Z}^{\prime} \to \mathcal{B}^{\prime}$ is the projection. If $\bar{v}=v_1 \otimes s_1+\ldots+v_m \otimes s_m$ belongs to $\mathcal{H}^0(\g \otimes \mathcal{S};V \otimes \mathcal{S})=\mathcal{H}^0(\g;V) \otimes \mathcal{S}$, by \textbf{Prop. \ref{prop1}} we may assume that each $v_j$ is in $\mathcal{H}^0(\g;V)=\mathcal{Z}^0$, then:
$$
\aligned
&(\zeta \otimes \mathcal{S})\circ \Psi)(\bar{v})=(\zeta \otimes \mathcal{S})(v_1 \otimes s_1+\ldots+v_m \otimes s_m)\\
&=\zeta(v_1) \otimes s_1+\ldots+\zeta(v_m) \otimes s_m=0,
\endaligned
$$
Thus $\operatorname{Im}(\Psi) \subset \operatorname{Ker}\left(\zeta\otimes \mathcal{S}\right)=\Im(\iota^{\prime} \otimes \mathcal{S})$, and $(\zeta \otimes \mathcal{S})\circ \Psi=0$. 
\smallskip

For $p=0$ the map $\pi \otimes \mathcal{S}:\mathcal{Z}^0\otimes \mathcal{S}\to \mathcal{H}^0(\g;V)\otimes \mathcal{S}$ is the identity and $\iota^{\prime}\otimes \mathcal{S}:\mathcal{H}^0(\g;V)\otimes \mathcal{S}\to {\mathcal{Z}^{\prime}}^0\otimes \mathcal{S}$ is the inclusion. Since any map that makes commutative \eqref{propiedad de alpha} is unique, then $\alpha$ is the identity for $p=0$.
\smallskip

For $p>0$, let $\Lambda$ be in $C^p(\g \otimes \mathcal{S};V \otimes \mathcal{S})$ such that $\D \Lambda=0$. By \eqref{j3}, $\mathcal{L}(\Lambda_j)$ belongs to $\mathcal{Z}$ for all $j$. By \eqref{trivial} and \eqref{psi} we have:
$$
\aligned
& \left(\zeta \otimes \mathcal{S}\right) \circ \Psi\left(\Lambda+\mathcal{B}(\g \otimes \mathcal{S};V \otimes \mathcal{S}) \right)\\
&=(\zeta\otimes \mathcal{S})\left((\mathcal{L}(\Lambda_1)+\mathcal{B}) \otimes s_1+\ldots+(\mathcal{L}(\Lambda_m)+\mathcal{B}) \otimes s_m \right)\\
&=\left(\mathcal{L}(\Lambda_1)+\mathcal{Z}\right) \otimes s_1+\ldots+\left(\mathcal{L}(\Lambda_m)+\mathcal{Z}\right) \otimes s_m=0.
\endaligned
$$
Then the composition $\left(\zeta \otimes \mathcal{S}\right) \circ \Psi$ is zero for $p>0$.

\end{proof}

\begin{claim}\label{claim 5}{\sl
There exists a linear map $\alpha:\mathcal{H}(\g \otimes \mathcal{S};V \otimes \mathcal{S}) \to \mathcal{H}(\g;V) \otimes \mathcal{S}$ that makes commutative the diagram \eqref{propiedad de alpha}.
}
\end{claim}
\begin{proof}
Since $\Im(\Psi) \subset \Ker(\zeta \otimes \mathcal{S})=\Im(\iota^{\prime} \otimes \mathcal{S})$ (see \textbf{Claim. \ref{nueva claim}}), then for each $\overline{\Lambda}$ in $\mathcal{H}(\g \otimes \mathcal{S};V \otimes \mathcal{S})$ there exists $\theta$ in $\mathcal{H}(\g;V) \otimes \mathcal{S}$, such that $\Psi(\overline{\Lambda})=\left(\iota^{\prime} \otimes \mathcal{S}\right)(\theta)$. Due to $\iota^{\prime} \otimes \mathcal{S}$ is injective, $\theta$ is unique. 
\smallskip

Let $\alpha:\mathcal{H}(\g \otimes \mathcal{S};V \otimes \mathcal{S}) \to \mathcal{H}(\g;V) \otimes \mathcal{S}$, be defined by $\alpha(\overline{\Lambda})=\theta$, then $\Psi=(\iota^{\prime}\otimes \mathcal{S})\circ \alpha$. Due to $\iota^{\prime} \otimes \mathcal{S}$ is injective, then $\Ker(\alpha)=\Ker(\Psi)$. Since \eqref{2x} is commutative, it follows that $(\iota^{\prime} \otimes \mathcal{S})\circ \left(\alpha \circ \Phi\right)\!=\!(\iota^{\prime} \otimes \mathcal{S}) \circ (\pi \otimes \mathcal{S})$. Thus, $\alpha\circ\Phi=\pi \otimes \mathcal{S}$, and $\alpha$ makes commutative the diagram \eqref{propiedad de alpha}.
\end{proof}

\begin{claim}{\sl
The map $\alpha:\mathcal{H}(\g \otimes \mathcal{S};V \otimes \mathcal{S}) \to \mathcal{H}(\g;V) \otimes \mathcal{S}$ is surjective.}
\end{claim}
\begin{proof}
Let $\theta$ be in $\mathcal{H}(\g;V)\otimes\mathcal{S}$. Since $\pi \otimes \mathcal{S}$ is surjective, there exists $\mu$ in $\mathcal{Z} \otimes \mathcal{S}$ such that $\left(\pi \otimes \mathcal{S}\right)(\mu)=\theta$. Let $\bar{\Lambda}=\Phi(\mu)$, then $\alpha(\bar{\Lambda})=(\alpha \circ \Phi)(\mu)=(\pi \otimes \mathcal{S})(\mu)=\theta$. Thus $\alpha$ is surjective. 
\end{proof}

We shall give an explicit description of $\alpha$. Let $\Lambda+\mathcal{B}(\g \otimes \mathcal{S};V \otimes \mathcal{S})$ be in $\mathcal{H}(\g \otimes \mathcal{S};V \otimes \mathcal{S})$, where $\Lambda$ is in $C^p(\g \otimes \mathcal{S};V \otimes \mathcal{S})$. As $\alpha(\Lambda+\mathcal{B}(\g \otimes \mathcal{S};V \otimes \mathcal{S}))$ belongs to $\mathcal{H}(\g;V) \otimes \mathcal{S}$, by \textbf{Prop. \ref{prop1}} there are $\mu_j$ in $\mathcal{Z}$ such that:
\begin{equation}\label{explicita 1}
\alpha(\Lambda+\mathcal{B}(\g \otimes \mathcal{S};V \otimes \mathcal{S}))\!\!=\!\!\left(\mu_1+\mathcal{B}\right) \otimes s_1+\dots+\left(\mu_m+\mathcal{B}\right) \otimes s_m.
\end{equation}
By \eqref{propiedad de alpha}, $(\iota^{\prime} \otimes \mathcal{S})\circ \alpha=\Psi$. Then using \eqref{j7} and \eqref{psi} it follows:
\begin{equation}\label{explicita2}
\begin{split}
&(\iota^{\prime} \otimes \mathcal{S})\circ \alpha(\Lambda+\mathcal{B}(\g \otimes \mathcal{S};V \otimes \mathcal{S}))=\Psi\left(\Lambda+\mathcal{B}(\g \otimes \mathcal{S};V \otimes \mathcal{S})\right)\\
&=\left(\mathcal{L}(\Lambda_1)+\mathcal{B}\right)\otimes s_1+\ldots+\left(\mathcal{L}(\Lambda_m)+\mathcal{B}\right)\otimes s_m.
\end{split}
\end{equation}
Applying $\iota^{\prime} \otimes \mathcal{S}$ to \eqref{explicita 1}, we get:
\begin{equation}\label{explicita3}
\begin{split}
&(\iota^{\prime} \otimes \mathcal{S})\circ \alpha(\Lambda+\mathcal{B}(\g \otimes \mathcal{S};V \otimes \mathcal{S}))\\
&=(\iota^{\prime} \otimes \mathcal{S})\left(\sum_{j=1}^m\left( \mu_j+\mathcal{B}\right) \otimes s_j \right)=\sum_{j=1}^m\left( \mu_j+\mathcal{B}\right) \otimes s_j
\end{split}
\end{equation}
From \eqref{explicita2} and \eqref{explicita3} it follows: $\mu_j+\mathcal{B}=\mathcal{L}(\Lambda_j)+\mathcal{B}$ for all $j$. Thus, by \eqref{explicita 1} we obtain:
\begin{equation}\label{explicita alfa}
\alpha(\Lambda+\mathcal{B}(\g \otimes \mathcal{S};V \otimes \mathcal{S}))\!=\!\left( \mathcal{L}(\Lambda_1)+\mathcal{B}\right) \otimes s_1+\ldots+\left( \mathcal{L}(\Lambda_m)+\mathcal{B}\right) \otimes s_m,
\end{equation}
Finally, in the proof of \textbf{Claim \ref{nueva claim}}, we showed that $\alpha$ is the identity on $\mathcal{H}^0(\g \otimes \mathcal{S};V \otimes \mathcal{S})=\mathcal{H}^0(\g;V) \otimes \mathcal{S}$. 
\smallskip

\textbf{(ii)} We shall prove that if $f:C(\g;V) \to C(\g;W)$ is a map of complexes, then the following diagram is commutative:
\begin{equation}\label{propiedad natural de alpha}
\xymatrix@C+1pc{
\mathcal{H}(\g \otimes \mathcal{S};V \otimes \mathcal{S}) \ar[r]^{\mathcal{H}\left(\mathcal{T}(f)\right)} \ar[d]_{\alpha_V} & \mathcal{H}(\g \otimes \mathcal{S};W \otimes \mathcal{S}) \ar[d]^{\alpha_W}\\
\mathcal{H}(\g;V) \otimes \mathcal{S} \ar[r]^{\mathcal{H}(f) \otimes \mathcal{S}} & \mathcal{H}(\g;W) \otimes \mathcal{S}
}
\end{equation}
Let $f^{\prime}:\mathcal{Z}^{\prime}(\g;V)\to \mathcal{Z}^{\prime}(\g;W)$ be the map induced by $f$, that is $f^{\prime}(\lambda+\mathcal{B})=f(\lambda)+\mathcal{B}$. Then $\pi^{\prime} \circ f=f^{\prime} \circ \pi^{\prime}$. Due to the diagram \eqref{propiedad de alpha} is commutative, by \eqref{psi} we have:
\begin{equation}\label{u1}
\begin{split}
&(\iota^{\prime} \otimes \mathcal{S}) \circ (\alpha_W \circ \mathcal{H}(\mathcal{T}(f))\\
&=\left((\iota^{\prime} \otimes \mathcal{S}) \circ \alpha_W\right) \circ \mathcal{H}(\mathcal{T}(f))=\Psi \circ \mathcal{H}(\mathcal{T}(f))=\mathcal{T}(f^{\prime}) \circ \Psi.
\end{split}
\end{equation}
By \eqref{trivial}, $\mathcal{T}(f^{\prime})=f^{\prime} \otimes \mathcal{S}$, and by \eqref{propiedad de alpha}, $\Psi=(\iota^{\prime} \otimes \mathcal{S}) \circ \alpha_V$. In addition, $\mathcal{T}(\iota^{\prime})=\iota^{\prime} \otimes \mathcal{S}$ and as $f$ is a map of complexes, then $\iota^{\prime}\circ \mathcal{H}(f)=f^{\prime}\circ \iota^{\prime}$, hence:
\begin{equation}\label{u2}
\begin{split}
& \mathcal{T}(f^{\prime}) \circ \Psi= \mathcal{T}(f^{\prime})\circ (\mathcal{T}(\iota^{\prime})\circ \alpha_V)=\left(\mathcal{T}(f^{\prime})\circ \mathcal{T}(\iota^{\prime})\right)\circ \alpha_V\\
&=\mathcal{T}\left(f^{\prime} \circ \iota^{\prime}\right) \circ \alpha_V=\mathcal{T}\left(\iota^{\prime} \circ \mathcal{H}(f) \right) \circ \alpha_V\\
&=\left(\left(\iota^{\prime} \circ \mathcal{H}(f) \right) \otimes \mathcal{S} \right) \circ \alpha_V=\left(\iota^{\prime} \otimes \mathcal{S} \right) \circ \left(\left(\mathcal{H}(f) \otimes \mathcal{S} \right) \circ \alpha_V\right)
\end{split}
\end{equation}

As $\iota^{\prime} \otimes \mathcal{S}$ is injective, from \eqref{u1} and \eqref{u2} we deduce that $\alpha_W \circ \mathcal{H}(\mathcal{T}(f))=(\mathcal{H}(f) \otimes \mathcal{S}) \circ \alpha_V$, which proves that \eqref{propiedad natural de alpha} is commutative.
\end{proof}

Let $\mathcal{R}^0=\{0\}$. For each $p>0$, consider the subspace $\mathcal{R}^p$ of $C^p(\g \otimes \mathcal{S};V \otimes \mathcal{S})$, generated by all $\Theta$ such that $\Theta(x_1 \otimes 1,\ldots,x_p \otimes 1)=0$. Let $\mathcal{R}=\oplus_{p\geq 0}\mathcal{R}^p$ and let $\mathcal{Q}$ be the quotient:
$$
\mathcal{Q}=\left(\mathcal{Z}(\g \otimes \mathcal{S};V \otimes \mathcal{S}) \cap \mathcal{R}+\mathcal{B}(\g \otimes \mathcal{S};V \otimes \mathcal{S})\right)/\mathcal{B}(\g \otimes \mathcal{S};V \otimes \mathcal{S}).
$$
Now we shall state the main result of this work.

\begin{Theorem}\label{teorema de coeficientes universales}{\sl
Let $\g$ be a Lie algebra and $\mathcal{S}$ be a $m$-dimensional, associative and commutative algebra with unit. Let $\g \otimes \mathcal{S}$ be the current Lie algebra of $\g$ by $\mathcal{S}$. 
Let $\alpha:\mathcal{H}(\g \otimes \mathcal{S};V \otimes \mathcal{S})\to \mathcal{H}(\g;V) \otimes \mathcal{S}$ be the map of {\bf Prop. \ref{morfismo natural}}. Then the following short exact sequence is exact:
\begin{equation}\label{coeficientes universales}
\xymatrix@C+.42pc{
0 \ar[r] & \mathcal{Q} \ar[r]^>(.5){\subseteq} & \mathcal{H}(\g \otimes \mathcal{S};V \otimes \mathcal{S}) \ar[r]^<(.2){\alpha} & \mathcal{H}(\g;V) \otimes \mathcal{S} \ar[r] & 0
}
\end{equation}
}
\end{Theorem}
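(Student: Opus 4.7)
The plan is to verify exactness of \eqref{coeficientes universales} at each of the three nontrivial positions. Surjectivity of $\alpha$ is already established in \textbf{Prop. \ref{morfismo natural}}(i), and injectivity of the inclusion $\mathcal{Q}\hookrightarrow \mathcal{H}(\g\otimes\mathcal{S};V\otimes\mathcal{S})$ is built into the definition of $\mathcal{Q}$ as a quotient of a subspace of $\mathcal{Z}(\g\otimes\mathcal{S};V\otimes\mathcal{S})$ by $\mathcal{B}(\g\otimes\mathcal{S};V\otimes\mathcal{S})$. The real content is to show $\Ker(\alpha)=\mathcal{Q}$, and for this I would rely on the explicit formula \eqref{explicita alfa} together with \textbf{Prop. \ref{prop L y d conmutan}}.

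For the inclusion $\mathcal{Q}\subseteq \Ker(\alpha)$, pick $\Theta\in \mathcal{Z}(\g\otimes\mathcal{S};V\otimes\mathcal{S})\cap \mathcal{R}^p$ with $p>0$. Writing $\Theta(x_1\otimes 1,\ldots,x_p\otimes 1)=\sum_{j=1}^m \mathcal{L}(\Theta_j)(x_1,\ldots,x_p)\otimes s_j$ via \eqref{widehat} and \eqref{j2}, the vanishing of the left-hand side together with linear independence of $\{s_1,\ldots,s_m\}$ forces $\mathcal{L}(\Theta_j)=0$ for every $j$. Then \eqref{explicita alfa} yields $\alpha(\Theta+\mathcal{B}(\g\otimes\mathcal{S};V\otimes\mathcal{S}))=0$, so $\Theta+\mathcal{B}(\g\otimes\mathcal{S};V\otimes\mathcal{S})\in \Ker(\alpha)$.

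For the reverse inclusion $\Ker(\alpha)\subseteq \mathcal{Q}$, suppose $\Lambda$ is a $p$-cocycle with $p\geq 1$ and $\alpha(\Lambda+\mathcal{B}(\g\otimes\mathcal{S};V\otimes\mathcal{S}))=0$. From \eqref{explicita alfa} and linear independence of $\{s_j\}$, each $\mathcal{L}(\Lambda_j)$ lies in $\mathcal{B}$, so I choose $\nu_j\in C^{p-1}(\g;V)$ with $\mathcal{L}(\Lambda_j)=\d\nu_j$. Then I construct the cochain
$$
N(x_1\otimes t_1,\ldots,x_{p-1}\otimes t_{p-1})=\sum_{j=1}^m \nu_j(x_1,\ldots,x_{p-1})\otimes s_j\,t_1\cdots t_{p-1},
$$
which is a well-defined element of $C^{p-1}(\g\otimes\mathcal{S};V\otimes\mathcal{S})$: it is multilinear in each pair $(x_i,t_i)$, hence factors through the $x_i\otimes t_i$, and it is alternating by commutativity of $\mathcal{S}$ together with alternation of each $\nu_j$. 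Evaluating at $(x_1\otimes 1,\ldots,x_{p-1}\otimes 1)$ shows $\mathcal{L}(N_j)=\nu_j$ for every $j$. By \eqref{j3} applied to $N$, $\mathcal{L}((\D N)_j)=\d\mathcal{L}(N_j)=\d\nu_j=\mathcal{L}(\Lambda_j)$, so every component of $\Lambda-\D N$ vanishes under $\mathcal{L}$. The expansion $(\Lambda-\D N)(x_1\otimes 1,\ldots,x_p\otimes 1)=\sum_j \mathcal{L}((\Lambda-\D N)_j)(x_1,\ldots,x_p)\otimes s_j=0$ then places $\Lambda-\D N$ in $\mathcal{R}^p$. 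Since $\D\Lambda=0$ and $\D^2=0$, the difference $\Lambda-\D N$ is also a cocycle, so the decomposition $\Lambda=(\Lambda-\D N)+\D N$ exhibits $\Lambda+\mathcal{B}(\g\otimes\mathcal{S};V\otimes\mathcal{S})$ as an element of $\mathcal{Q}$. For $p=0$, \textbf{Claim \ref{T1}} already gives $\alpha^0=\Id$, so $\Ker(\alpha^0)=0=\mathcal{Q}^0$ by the convention $\mathcal{R}^0=\{0\}$.

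The main obstacle is the construction of the ``lift'' $N$: one needs a single $(p-1)$-cochain of $\g\otimes\mathcal{S}$ whose components $\mathcal{L}(N_j)$ simultaneously recover the prescribed primitives $\nu_j\in C^{p-1}(\g;V)$. The formula above accomplishes this by placing each $\nu_j$ in the $s_j$-slot and inserting the symmetric factor $t_1\cdots t_{p-1}$ to ensure the required multilinearity over $\g\otimes\mathcal{S}$ without affecting the values at the distinguished arguments $x_i\otimes 1$. Once $N$ is in hand, the relation $\Lambda-\D N\in\mathcal{R}\cap\mathcal{Z}$ reduces directly to the commutativity of $\mathcal{L}$, $\chi_j$, $\d$ and $\D$ already proved in \textbf{Prop. \ref{prop L y d conmutan}}.
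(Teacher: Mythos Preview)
Your proof is correct and follows essentially the same strategy as the paper: both reduce to showing $\Ker(\alpha)=\mathcal{Q}$, both handle $\mathcal{Q}\subseteq\Ker(\alpha)$ by observing that $\Theta\in\mathcal{R}$ forces $\mathcal{L}(\Theta_j)=0$, and both handle the reverse inclusion by lifting the primitives $\nu_j$ to a $(p-1)$-cochain on $\g\otimes\mathcal{S}$ and invoking \textbf{Prop.~\ref{prop L y d conmutan}} to conclude $\Lambda-\D N\in\mathcal{R}\cap\mathcal{Z}$. The only noteworthy difference is the choice of lift: the paper sets $\Delta_j(x_1\otimes t_1,\ldots,x_{p-1}\otimes t_{p-1})=\omega_1(t_1\cdots t_{p-1})\,\theta_j(x_1,\ldots,x_{p-1})$ and then treats $p=1$ separately, whereas your $N=\sum_j\nu_j\otimes s_j\,\bar t$ handles all $p\geq 1$ uniformly (with the empty-product convention for $p=1$) and is in fact the very lift the paper itself employs in \textbf{Prop.~\ref{semisimple}}. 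You also work directly with the explicit description \eqref{explicita alfa} of $\alpha$ rather than passing through $\Ker(\alpha)=\Ker(\Psi)$; this is a cosmetic simplification, not a different argument.
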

\begin{proof}
By \eqref{j1}, $\Theta$ is in $\mathcal{R}$ if and only if $\Theta_j(x_1 \otimes 1,\ldots,x_p \otimes 1)=\mathcal{L}(\Theta_j)(x_1,\ldots,x_p)=0$, for all $j$. Then $\Theta$ is in $\mathcal{R}$ if and only if $\Theta_j$ is in $\Ker(\mathcal{L})$, for all $j$.
\smallskip

In the proof of \textbf{Claim \ref{claim 5}}, we showed that $\Ker(\alpha)=\Ker(\Psi)$. We claim that $\Ker(\Psi)=\mathcal{Q}$. Let $\Lambda+\mathcal{B}(\g \otimes \mathcal{S};V \otimes \mathcal{S})$ be in $\Ker(\Psi)$. We will find an element $\Theta+\mathcal{B}(\g \otimes \mathcal{S};V \otimes \mathcal{S})$ in $\mathcal{Q}$ such that $\Lambda+\mathcal{B}(\g \otimes \mathcal{S};V \otimes \mathcal{S})=\Theta+\mathcal{B}(\g \otimes \mathcal{S};V \otimes \mathcal{S})$. By \eqref{psi} we have:
\begin{equation}\label{Thm0}
\Psi(\Lambda+\mathcal{B}(\g \otimes \mathcal{S};V \otimes  \mathcal{S}))\!=\!\sum_{j=1}^m(\mathcal{L}(\Lambda_j)+\mathcal{B})\! \otimes \!s_j=0.
\end{equation}
Then $\mathcal{L}(\Lambda_j)$ belongs to $\mathcal{B}$ for all $j$. Hence, there exists $\theta_j$ in $C^{p-1}(\g;V)$ such that $\mathcal{L}(\Lambda_j)=\d \theta_j$. 
\smallskip

Let $p>1$. For each $j$, let $\Delta_j$ be in $C^{p-1}(\g\otimes \mathcal{S};V)$ defined by:
$$
\Delta_j(x_1 \otimes t_1,\ldots,x_{p-1} \otimes t_{p-1})=\omega_1\left(t_1\cdots t_{p-1}\right)\theta_j(x_1,\ldots,x_{p-1}).
$$
Then $\mathcal{L}(\Delta_j)=\theta_j$ (see \eqref{j2}). Let $\Delta$ be in $C^{p-1}(\g \otimes \mathcal{S};V \otimes \mathcal{S})$ defined by:
\begin{equation*}\label{Thm1}
\Delta(\bar{x}_1,\ldots,\bar{x}_{p-1})\!=\!\Delta_1(\bar{x}_1,\ldots,\bar{x}_{p-1}) \otimes s_1\!+\!\ldots\!+\!\Delta_m(\bar{x}_1,\ldots,\bar{x}_{p-1}) \otimes s_m,
\end{equation*}
for all $\bar{x}_1,\ldots,\bar{x}_{p-1}$ in $\g \otimes \mathcal{S}$. From \eqref{j3} we have: 
$$
\mathcal{L}(\Lambda_j)=\d \theta_j=\d(\mathcal{L}(\Delta_j))=\mathcal{L}((\D\Delta)_j),\,\text{ for all }1 \leq j \leq m.
$$
Then there exists $\Theta_j$ in $\Ker(\mathcal{L})$ such that $\Lambda_j=(\D\Delta)_j+\Theta_j$. Let $\Theta$ be in $C^p(\g \otimes \mathcal{S};V \otimes \mathcal{S})$ defined by:
\begin{equation*}\label{Thm1-2}
\Theta(\bar{x}_1,\ldots,\bar{x}_{p})=\Theta_1(\bar{x}_1,\ldots,\bar{x}_{p})\otimes s_1+\ldots+\Theta_m(\bar{x}_1,\ldots,\bar{x}_{p})\otimes s_m,
\end{equation*}
for all $\bar{x}_1,\ldots,\bar{x}_p$ in $\g \otimes \mathcal{S}$. As $\Lambda_j=(\D\Delta)_j+\Theta_j$ for each $j$, then $\Lambda=\D \Delta+\Theta$ (see \eqref{widehat}). Due to $\Theta_j$ belongs to $\Ker(\mathcal{L})$, then $\Theta$ belongs to $\mathcal{R}$ and $\Lambda\!+\!\mathcal{B}(\g \otimes \mathcal{S};V \otimes \mathcal{S})\!=\!\Theta\!+\!\mathcal{B}(\g \otimes \mathcal{S};V \otimes \mathcal{S})$ belongs to $\mathcal{Q}$. 
\smallskip

Let $p=1$. By \eqref{Thm0}, $\mathcal{L}(\Lambda_j)$ belongs to $\mathcal{B}$, then there exists $v_j$ in $V$ such that $\mathcal{L}(\Lambda_j)=\d v_j $. Let $\Delta=v_1 \otimes s_1+\ldots+v_m \otimes s_m$. By \eqref{chi}, $\chi_j(\Delta)=v_j$. By definition of $\mathcal{L}$ (see \eqref{j2}), $\mathcal{L}(\chi_j(\Delta))=v_j$. Since $\mathcal{L}\circ \chi_j \circ \D=\d \circ \mathcal{L}\circ \chi_j$ (see \textbf{Prop. \ref{prop L y d conmutan}}) and $\chi_j(\D\Delta)=(\D \Delta)_j$ (see \eqref{widehat}), we obtain:
$$
\mathcal{L}(\Lambda_j)=\d v_j=\d(\mathcal{L}(\chi_j(\Delta)))=\mathcal{L}(\chi_j(\D \Delta))=\mathcal{L}((\D \Delta)_j).
$$
Then there exists $\Theta_j$ in $\Ker(\mathcal{L})$ such that $\Lambda_j=(\D\Delta)_j+\Theta_j$. Let $\Theta$ be in $C^1(\g \otimes \mathcal{S};V \otimes \mathcal{S})$ defined by:
$\Theta(\bar{x})=\Theta_1(\bar{x})\otimes s_1+\ldots+\Theta_m(\bar{x})\otimes s_m$, for all $\bar{x}$ in $\g \otimes \mathcal{S}$; then $\chi_j(\Theta)=\Theta_j$. As $\Lambda_j=(\D\Delta)_j+\Theta_j$ for each $j$, then $\Lambda=\D \Delta+\Theta$ (see \eqref{widehat}). Due to $\Theta_j$ belongs to $\Ker(\mathcal{L})$, then $\Theta$ belongs to $\mathcal{R}$. Thus for all $p\geq 1$, $\Lambda=\D \Delta+\Theta$.
\smallskip

Since $\D \Lambda=0$ then $\D \Theta=0$. Therefore $\Lambda+\mathcal{B}(\g \otimes \mathcal{S};V \otimes \mathcal{S})=\Theta+\mathcal{B}(\g \otimes \mathcal{S};V \otimes \mathcal{S})$ belongs to $\mathcal{Q}$, which proves that $\Ker(\alpha) \subset \mathcal{Q}$.
\smallskip

Now we assert that $\mathcal{Q} \subset \Ker(\alpha)$. Let $\Theta+\mathcal{B}(\g \otimes \mathcal{S};V \otimes \mathcal{S})$ be in $\mathcal{Q}$, where $\Theta$ is in $\mathcal{Z}(\g \otimes \mathcal{S};V \otimes \mathcal{S}) \cap \mathcal{R}$. As $\Theta(x_1 \otimes 1,\ldots,x_p \otimes 1)=0$, then $\Theta_j$ belongs to $\Ker(\mathcal{L})$ for all $j$. By \eqref{Thm0} we have that $\Theta+\mathcal{B}(\g \otimes \mathcal{S};V \otimes \mathcal{S})$ belongs to $\Ker(\Psi)=\Ker(\alpha)$. Then, $\Ker(\alpha)=\mathcal{Q}$.
\smallskip

For $p=0$, we decreed that $\mathcal{Q}^0=\{0\}$; in addition in \textbf{Claim \ref{T1}} we showed that $\mathcal{H}^0(\g \otimes \mathcal{S};V \otimes \mathcal{S})=\mathcal{H}^0(\g;V) \otimes \mathcal{S}$ and $\alpha$ is the identity.
\end{proof}

\subsection{Current Lie algebras over semisimple Lie algebras}

We will determine the cohomology group of $\g \otimes \mathcal{S}$, where $\g$ is a semisimple Lie algebra, and $V$ is an irreducible $\g$-module. We shall use that $\mathcal{H}(\g;V)=\{0\}$ (see \cite{Che}, \textbf{Thm. 24.1)}).

\begin{Prop}\label{semisimple}{\sl
Let $\g$ be a semisimple Lie algebra and $V$ an irreducible $\g$-module. Let $\mathcal{S}$ be an associative and commutative algebra with unit over a field $\F$. Then $\mathcal{H}(\g \otimes \mathcal{S};V \!\otimes \mathcal{S})\!=\!\mathcal{Q}$.}
\end{Prop}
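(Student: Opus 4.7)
The plan is to combine \textbf{Thm.~\ref{teorema de coeficientes universales}} with the vanishing $\mathcal{H}(\g;V) = \{0\}$, which holds because $\g$ is semisimple and $V$ is irreducible (\textbf{Thm.~24.1} of \cite{Che}). Inserting this into the short exact sequence \eqref{coeficientes universales} forces the surjection $\alpha$ to be the zero map, and consequently $\mathcal{H}(\g \otimes \mathcal{S}; V \otimes \mathcal{S}) = \Ker(\alpha) = \mathcal{Q}$. In principle this already settles the statement, but since the author signals that \textbf{Remark~\ref{remark}} will be used, I would also present the identification explicitly.

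To do so, pick a cocycle $\Lambda \in C^p(\g \otimes \mathcal{S}; V \otimes \mathcal{S})$ and write
$$\Lambda = \mathcal{T}(\Id)(\Lambda) + \Theta, \qquad \Theta := \Lambda - \mathcal{T}(\Id)(\Lambda).$$
By formula \eqref{notFunctor} in the remark, $\Theta(x_1 \otimes 1, \ldots, x_p \otimes 1) = 0$, hence $\Theta \in \mathcal{R}$. It then suffices to show that $\mathcal{T}(\Id)(\Lambda)$ is a coboundary, for then $\Lambda + \mathcal{B}(\g \otimes \mathcal{S};V \otimes \mathcal{S}) = \Theta + \mathcal{B}(\g \otimes \mathcal{S};V \otimes \mathcal{S})$, and since $\D\Theta = \D\Lambda - \D\mathcal{T}(\Id)(\Lambda) = 0$, this class lies in $\mathcal{Q}$.

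The primitive is constructed exactly as in the proof of \textbf{Thm.~\ref{teorema de coeficientes universales}}: by \textbf{Prop.~\ref{prop L y d conmutan}} each $\mathcal{L}(\Lambda_j)$ is a $\d$-cocycle in $C^p(\g; V)$, and since $\mathcal{H}(\g;V) = 0$ we may write $\mathcal{L}(\Lambda_j) = \d \nu_j$ for some $\nu_j \in C^{p-1}(\g; V)$. Set
$$\Delta(x_1 \otimes t_1, \ldots, x_{p-1} \otimes t_{p-1}) := \sum_{j=1}^{m} \nu_j(x_1, \ldots, x_{p-1}) \otimes s_j\, t_1 \cdots t_{p-1}.$$
A direct application of \eqref{Dif} together with \eqref{representacion current} yields $\D\Delta = \mathcal{T}(\Id)(\Lambda)$, which gives $\Lambda + \mathcal{B}(\g \otimes \mathcal{S};V \otimes \mathcal{S}) \in \mathcal{Q}$ and completes the proof of $\mathcal{H}(\g \otimes \mathcal{S};V \otimes \mathcal{S}) \subset \mathcal{Q}$; the reverse inclusion is built into the definition of $\mathcal{Q}$.

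The only step requiring verification is the identity $\D\Delta = \mathcal{T}(\Id)(\Lambda)$, but the products $t_i(t_1 \cdots \hat{t}_i \cdots t_p) = \bar t$ and $(t_i t_k)(t_1 \cdots \hat{t}_i \cdots \hat{t}_k \cdots t_p) = \bar t$ collapse in exactly the same way as in the coboundary-realization computation already performed for \textbf{Thm.~\ref{teorema de coeficientes universales}}, so no essentially new work is needed.
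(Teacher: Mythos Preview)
Your proposal is correct and follows essentially the same route as the paper. Both arguments first invoke \textbf{Thm.~\ref{teorema de coeficientes universales}} together with $\mathcal{H}(\g;V)=\{0\}$, and then give a direct verification by writing $\mathcal{L}(\Lambda_j)=\d\nu_j$ and constructing the same primitive $\Delta$ (the paper calls it $\Omega$). The only cosmetic difference is where \textbf{Remark~\ref{remark}} enters: you apply it to $\Lambda$ to see at once that $\Theta=\Lambda-\mathcal{T}(\Id)(\Lambda)\in\mathcal{R}$ and then check $\D\Delta=\mathcal{T}(\Id)(\Lambda)$, whereas the paper applies it to $\Omega$ (so that $\mathcal{T}(\Id)(\Omega)=\Omega$ and \textbf{Prop.~\ref{map of complexes}} gives the formula for $\D\Omega$) and only afterwards sets $\Theta=\Lambda-\D\Omega$; since $\D\Omega=\mathcal{T}(\Id)(\Lambda)$ these two $\Theta$'s coincide.
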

\begin{proof}
As $\mathcal{H}(\g;V)=\{0\}$, by \textbf{Thm. \ref{teorema de coeficientes universales}} we get $\mathcal{H}(\g \otimes \mathcal{S};V \otimes \mathcal{S})=\mathcal{Q}$. Now we shall verify this result without using \textbf{Thm. \ref{teorema de coeficientes universales}}.
\smallskip

Let $\Lambda+\mathcal{B}(\g \otimes \mathcal{S};V \otimes \mathcal{S})$ be in $\mathcal{H}(\g \otimes \mathcal{S};V \otimes \mathcal{S})$. Then $\D \Lambda=0$, and $\widehat{\omega}_j\circ \D \Lambda=(\D \Lambda)_j=0$ for all $j$. By \textbf{Prop. \ref{prop L y d conmutan}}, $0=\mathcal{L}\left((\D \Lambda)_j\right)=\d \left(\mathcal{L}(\Lambda_j) \right)$, which implies that $\mathcal{L}(\Lambda_j)$ belongs to $\mathcal{Z}=\mathcal{B}$. Hence there exists $\mu_j$ in $C(\g;V)$ such that $\mathcal{L}(\Lambda_j)=\d \mu_j$; by \eqref{j2} we have:
\begin{equation}\label{semisimple 0}
\Lambda_j(x_1 \otimes 1,\ldots,x_p \otimes 1)=\d \mu_j(x_1,\ldots,x_p),\,\,\text{ for all }1 \leq j \leq m.
\end{equation}
Let $\Omega$ be in $C(\g \otimes \mathcal{S};V \otimes \mathcal{S})$ defined by:
\begin{equation}\label{definicion de mu}
\Omega(x_1 \otimes t_1,\ldots,x_p \otimes t_p)=\sum_{j=1}^m \mu_j(x_1,\ldots,x_p) \otimes s_j\,\bar{t},
\end{equation}
where $\bar{t}=t_1 \cdots t_p$. We claim that:
\begin{equation}\label{funcion auxiliar}
\D \Omega(x_1 \otimes t_1,\ldots,x_p \otimes t_p)=\sum_{j=1}^m \d \mu_j(x_1,\ldots,x_p) \otimes s_j\,\bar{t}.
\end{equation}
Indeed, if we write $\Omega$ as in \eqref{widehat} we obtain:
\begin{equation}\label{ss2}
\Omega(x_1 \otimes t_1,\ldots,x_p \otimes t_p)=\sum_{j=1}\Omega_j(x_1 \otimes t_1,\ldots,x_p \otimes t_p)\otimes s_j.
\end{equation}
From \eqref{definicion de mu}, \eqref{ss2} and the definition of $\mathcal{L}$, it follows: $\mathcal{L}(\Omega_j)(x_1,\ldots, x_p)=\Omega_j(x_1 \otimes 1,\ldots,x_p \otimes 1)\!=\!\mu_j(x_1,\ldots,x_p)$, then $\mathcal{L}(\Omega_j)\!=\!\mu_j$. By \eqref{j3} we get:
\begin{equation}\label{ss4}
\mathcal{L}\left((\D \Omega)_j\right)=\d \mathcal{L}(\Omega_j)=\d \mu_j\,\,\text{ for all }1 \leq j \leq m.
\end{equation}
By \textbf{Remark \eqref{remark}} and \eqref{definicion de mu}, $\mathcal{T}(\Id)(\Omega)=\Omega$. By \textbf{Prop. \ref{map of complexes}}, $\D \Omega=\D \mathcal{T}(\Id)(\Omega)=\mathcal{T}(\Id)(\D \Omega)$. Then by \eqref{j5} and \eqref{ss4} it follows:
$$
\aligned
& \D \Omega(x_1 \otimes t_1,\ldots,x_p \otimes t_p)=\mathcal{T}(\Id)(\D \Omega)((x_1 \otimes t_1,\ldots,x_p \otimes t_p))\\
&=\sum_{j=1}^m\mathcal{L}\left((\D \Omega)_j\right)(x_1,\ldots, x_p) \otimes s_j\,\bar{t}=\sum_{j=1}^m \d \mu_j(x_1,\ldots,x_p) \otimes s_j\, \bar{t},
\endaligned
$$
which proves \eqref{funcion auxiliar}. Let $\Theta=\Lambda-\D \Omega$. By \eqref{semisimple 0} and \eqref{funcion auxiliar} we obtain:
$$
\aligned
& \Theta(x_1 \otimes 1,\ldots,x_p \otimes 1)\!=\!\Lambda(x_1 \otimes 1,\ldots,x_p \otimes 1)-\D \Omega(x_1 \otimes 1,\ldots,x_p \otimes 1)\\
\,&=\sum_{j=1}^m \Lambda_j(x_1 \otimes 1,\ldots,x_p \otimes 1) \otimes s_j-\sum_{j=1}^m \d \mu_j(x_1,\ldots,x_p) \otimes s_j=0.
\endaligned
$$
Then $\Theta+\mathcal{B}(\g \otimes \mathcal{S};V \otimes \mathcal{S})$ belongs to $\mathcal{Q}$. Hence, $\mathcal{H}(\g \otimes \mathcal{S};V \otimes \mathcal{S})=\mathcal{Q}$.
\end{proof}

\section*{Appendix}

In this part of the work we prove the remaining cases of \textbf{Prop. \ref{composicion}}.

\begin{claim}{\sl
Let $f:C^p(\g;U) \to C^p(\g;V)$ and $g:C^p(\g;V) \to W$ be maps. Then $\mathcal{T}(g \circ f)=\mathcal{T}(g) \circ \mathcal{T}(f)$ is a map between $C^p(\g \otimes \mathcal{S};U \otimes \mathcal{S})$ and $W \otimes \mathcal{S}$.}
\end{claim}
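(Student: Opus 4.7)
The plan is to reduce this to the previous two claims, which already supply the main computational content. Observe first that $g \circ f$ has type $C^p(\g;U) \to W$, so by \eqref{j7} the composite $\mathcal{T}(g \circ f)$ sends any $\Lambda$ in $C^p(\g \otimes \mathcal{S}; U \otimes \mathcal{S})$ to
\[
\mathcal{T}(g \circ f)(\Lambda) = \sum_{j=1}^m (g \circ f)\bigl(\mathcal{L}(\Lambda_j)\bigr) \otimes s_j.
\]
On the other hand, $\mathcal{T}(f)$ lands in $C^p(\g \otimes \mathcal{S}; V \otimes \mathcal{S})$ via \eqref{j5}, and then $\mathcal{T}(g)$ turns this into an element of $W \otimes \mathcal{S}$ via \eqref{j7}.

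The key intermediate step, already established inside the proof of the first claim, is the identity
\[
\mathcal{L}\bigl(\mathcal{T}(f)(\Lambda)_j\bigr) = f\bigl(\mathcal{L}(\Lambda_j)\bigr), \quad 1 \leq j \leq m.
\]
I would first re-invoke this identity (the argument is the same: apply the definition of $\mathcal{L}$ to $\mathcal{T}(f)(\Lambda)_j = \widehat{\omega}_j \circ \mathcal{T}(f)(\Lambda)$, expand via \eqref{j5} evaluated at $(x_1 \otimes 1, \ldots, x_p \otimes 1)$, and use that $\overline{t} = 1$ in that case so that $\widehat{\omega}_j$ extracts the $j$-th component).

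With that identity in hand, the computation is immediate. Setting $\Theta = \mathcal{T}(f)(\Lambda)$ and applying \eqref{j7} to $\mathcal{T}(g)$, we obtain
\[
\mathcal{T}(g) \circ \mathcal{T}(f)(\Lambda) = \mathcal{T}(g)(\Theta) = \sum_{j=1}^m g\bigl(\mathcal{L}(\Theta_j)\bigr) \otimes s_j = \sum_{j=1}^m g\bigl(f(\mathcal{L}(\Lambda_j))\bigr) \otimes s_j.
\]
The right-hand side coincides with $\mathcal{T}(g \circ f)(\Lambda)$ displayed above, and since $\Lambda$ was arbitrary this yields $\mathcal{T}(g \circ f) = \mathcal{T}(g) \circ \mathcal{T}(f)$.

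There is no genuine obstacle here, since the only non-trivial ingredient, $\mathcal{L}(\mathcal{T}(f)(\Lambda)_j) = f(\mathcal{L}(\Lambda_j))$, was already verified in the first claim. The proof is essentially a matter of carefully matching the three different clauses \eqref{j5}, \eqref{j6}, \eqref{j7} of the definition of $\mathcal{T}$ to the three different types of maps appearing ($f$, $g$, and $g \circ f$), and then noting that the factor $s_j$ threading through the $j$-sum behaves identically on both sides.
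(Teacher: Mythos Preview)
Your proof is correct and follows essentially the same approach as the paper's: both set $\Theta = \mathcal{T}(f)(\Lambda)$, apply \eqref{j7} to $\mathcal{T}(g)(\Theta)$, establish the key identity $\mathcal{L}(\Theta_j) = f(\mathcal{L}(\Lambda_j))$, and then recognize the resulting sum as $\mathcal{T}(g \circ f)(\Lambda)$ via \eqref{j7}. The only cosmetic difference is that the paper re-derives $\mathcal{L}(\Theta_j) = f(\mathcal{L}(\Lambda_j))$ in full detail within this proof, whereas you invoke it from Claim~1 (while still sketching the derivation); this is a matter of presentation, not substance.
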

\begin{proof}
Let $\Lambda$ be in $C^p(\g \otimes \mathcal{S};U \otimes \mathcal{S})$, then $\Theta=\mathcal{T}(\Lambda)(f)$ belongs to $C^p(\g \otimes \mathcal{S};V \otimes \mathcal{S})$. Using \eqref{j7} we get:
\begin{equation}\label{A1}
\mathcal{T}(g)(\Theta)=g \left(\mathcal{L}(\Theta_1)\right)\otimes s_1+\ldots+g \left(\mathcal{L}(\Theta_m)\right) \otimes s_m.
\end{equation}
By definition of $\mathcal{L}$ (see \eqref{j2}), for each $j$ we have:
\begin{equation}\label{A2}
\begin{split}
& \mathcal{L}(\Theta_j)(x_1,\ldots,x_p)\!=\!\Theta_j(x_1 \!\otimes \!1,\ldots,x_p\! \otimes \!1)\\
&=\!(\widehat{\omega}_j \circ \Theta)(x_1\! \otimes \!1,\ldots,x_p \!\otimes \!1)\!=\!\widehat{\omega}_j\left( \mathcal{T}(f)(\Lambda)(x_1\! \otimes \!1,\ldots,x_p \!\otimes \!1)\right),
\end{split}
\end{equation}
Using \eqref{j5} in $\mathcal{T}(f)(\Lambda)$ above we obtain:
\begin{equation}\label{A3}
\mathcal{T}(f)(\Lambda)(x_1 \otimes 1,\ldots,x_p \otimes 1)=\sum_{j=1}^m f\left(\mathcal{L}(\Lambda_j)\right)(x_1,\ldots,x_p)\otimes s_j.
\end{equation}
Applying $\widehat{\omega}$ in \eqref{A3} it follows:
\begin{equation}\label{A4}
\widehat{\omega}_j\left( \mathcal{T}(f)(\Lambda)(x_1 \otimes 1,\ldots,x_p \otimes 1)\right)=f\left(\mathcal{L}(\Lambda_j)\right)(x_1,\ldots,x_p),
\end{equation}
From \eqref{A2} and \eqref{A4} we deduce: $\mathcal{L}(\Theta_j)=f\left(\mathcal{L}(\Lambda_j)\right)$ for all $j$. Substitute this in \eqref{A1} we get:
\begin{equation}\label{A6}
\mathcal{T}(g)(\Theta)=\sum_{j=1}^m g\left(f\left(\mathcal{L}(\Lambda_j)\right)\right) \otimes s_j=\sum_{j=1}^m (g \circ f)\left(\mathcal{L}(\Lambda_j)\right) \otimes s_j.
\end{equation}
Due to $\Theta=\mathcal{T}(f)(\Lambda)$, by \eqref{A6} and \eqref{j7} it follows:
$$
\mathcal{T}(g) \circ \mathcal{T}(f)(\Lambda)=\sum_{j=1}^m (g \circ f)\left(\mathcal{L}(\Lambda_j)\right) \otimes s_j=\mathcal{T}(g \circ f)(\Lambda).
$$
Then $\mathcal{T}(g)\circ \mathcal{T}(f)=\mathcal{T}(g \circ f)$, which proves our claim.
\end{proof}

\begin{claim}{\sl
Let $f:C^p(\g;U) \to V$ and $g:V \to W$ be maps. Then $\mathcal{T}(g \circ f)=\mathcal{T}(g) \circ \mathcal{T}(f)$ is a map between $C^p(\g \otimes \mathcal{S};U \otimes \mathcal{S})$ and $W \otimes \mathcal{S}$.}
\end{claim}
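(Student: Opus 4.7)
The plan is to verify the identity by direct application of the definitions in \eqref{j7} and \eqref{trivial}, since the map $\mathcal{T}$ has a particularly clean form in this case. First I would identify which formula governs each of the three maps involved. The map $f:C^p(\g;U)\to V$ goes from a cochain space to a module, so $\mathcal{T}(f):C^p(\g\otimes\mathcal{S};U\otimes\mathcal{S})\to V\otimes\mathcal{S}$ is given by \eqref{j7}. The map $g:V\to W$ goes between modules, so $\mathcal{T}(g):V\otimes\mathcal{S}\to W\otimes\mathcal{S}$ is simply $g\otimes\mathcal{S}$ by \eqref{trivial}. The composition $g\circ f:C^p(\g;U)\to W$ again goes from a cochain space to a module, so $\mathcal{T}(g\circ f)$ is again given by \eqref{j7}.

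Next I would fix $\Lambda$ in $C^p(\g\otimes\mathcal{S};U\otimes\mathcal{S})$ and simply compute. By \eqref{j7},
$$
\mathcal{T}(f)(\Lambda)=\sum_{j=1}^m f(\mathcal{L}(\Lambda_j))\otimes s_j\ \in\ V\otimes\mathcal{S}.
$$
Applying $\mathcal{T}(g)=g\otimes\mathcal{S}$ to this element, by \eqref{trivial} and linearity we obtain
$$
\mathcal{T}(g)\circ\mathcal{T}(f)(\Lambda)=\sum_{j=1}^m g\bigl(f(\mathcal{L}(\Lambda_j))\bigr)\otimes s_j=\sum_{j=1}^m (g\circ f)(\mathcal{L}(\Lambda_j))\otimes s_j.
$$
On the other hand, by \eqref{j7} applied to $g\circ f$,
$$
\mathcal{T}(g\circ f)(\Lambda)=\sum_{j=1}^m (g\circ f)(\mathcal{L}(\Lambda_j))\otimes s_j,
$$
so the two expressions coincide. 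Since $\Lambda$ was arbitrary, this gives the equality $\mathcal{T}(g\circ f)=\mathcal{T}(g)\circ\mathcal{T}(f)$ as maps from $C^p(\g\otimes\mathcal{S};U\otimes\mathcal{S})$ to $W\otimes\mathcal{S}$.

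There is essentially no obstacle in this case: the simplicity comes from the fact that $\mathcal{T}(g)$ reduces to $g\otimes\mathcal{S}$, which is linear on each tensor factor, so it commutes transparently with the finite sum defining $\mathcal{T}(f)(\Lambda)$. The only bookkeeping point is to make sure the correct clause of the definition of $\mathcal{T}$ is invoked at each stage (\eqref{j7} for maps landing in a module, \eqref{trivial} for maps between modules), and this is immediate from the types of $f$, $g$, and $g\circ f$.
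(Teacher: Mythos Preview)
Your proof is correct and follows essentially the same approach as the paper: both compute $\mathcal{T}(f)(\Lambda)$ via \eqref{j7}, apply $\mathcal{T}(g)=g\otimes\mathcal{S}$ from \eqref{trivial} term by term, and then recognize the result as $\mathcal{T}(g\circ f)(\Lambda)$ by \eqref{j7}. Your identification of which clause of the definition governs each map is exactly the bookkeeping the paper carries out, just stated a bit more explicitly.
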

\begin{proof}
Let $\Lambda$ be in $C^p(\g \otimes \mathcal{S};U \otimes \mathcal{S})$ and let $\bar{v}=\mathcal{T}(f)(\Lambda)$, which belongs to $V \otimes \mathcal{S}$. By \eqref{j7} we have:
\begin{equation}\label{A7}
\bar{v}=\mathcal{T}(f)(\Lambda)=f \left(\mathcal{L}(\Lambda_1)\right) \otimes s_1+\ldots+f \left(\mathcal{L}(\Lambda_m)\right) \otimes s_m.
\end{equation}
By definition, $\mathcal{T}(g)=g \otimes \mathcal{S}$. Applying $\mathcal{T}(g)$ to $\bar{v}$ in \eqref{A7} we obtain:
\begin{equation}\label{A8}
\mathcal{T}(g)(\bar{v})=(g \circ f)\left(\mathcal{L}(\Lambda_1)\right) \otimes s_1+\ldots+(g \circ f)\left(\mathcal{L}(\Lambda_m)\right) \otimes s_m.
\end{equation}
Using \eqref{j7} in \eqref{A8} we get
$$
\mathcal{T}(g)\circ \mathcal{T}(f)(\Lambda)=\sum_{j=1}^m(g \circ f)\left(\mathcal{L}(\Lambda_j)\right) \otimes s_j=\mathcal{T}(g \circ f)(\Lambda).
$$
From it follows $\mathcal{T}(g)\circ \mathcal{T}(f)=\mathcal{T}(g \circ f)$.

\end{proof}

\begin{claim}{\sl
Let $f:U \to V$ and $g:V \to C^p(\g;W)$ be maps. Then $\mathcal{T}(g \circ f)=\mathcal{T}(g)\circ \mathcal{T}(f)$ is a map between $U \otimes \mathcal{S}$ and $C^p(\g \otimes \mathcal{S};W \otimes \mathcal{S})$. 
}
\end{claim}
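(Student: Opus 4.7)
The plan is to unwind definitions on both sides and check they agree on simple tensors $u \otimes s$, since $\mathcal{T}(g \circ f)$ and $\mathcal{T}(g) \circ \mathcal{T}(f)$ are linear. The key observation is that, by \eqref{trivial}, $\mathcal{T}(f)$ is the simple tensor extension $f \otimes \mathcal{S}: U \otimes \mathcal{S} \to V \otimes \mathcal{S}$, so $\mathcal{T}(f)(u \otimes s) = f(u) \otimes s$. Meanwhile, $g$ and $g \circ f$ both have target $C^p(\g;W)$, so their images under $\mathcal{T}$ are computed via \eqref{j6}.

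First I would fix $u \otimes s \in U \otimes \mathcal{S}$ and arbitrary arguments $x_1 \otimes t_1, \ldots, x_p \otimes t_p \in \g \otimes \mathcal{S}$. Applying \eqref{trivial} and then \eqref{j6} gives
$$
\mathcal{T}(g)\circ \mathcal{T}(f)(u \otimes s)(x_1\otimes t_1,\ldots,x_p \otimes t_p) = \mathcal{T}(g)(f(u)\otimes s)(x_1\otimes t_1,\ldots,x_p \otimes t_p) = g(f(u))(x_1,\ldots,x_p)\otimes s\,\bar{t},
$$
and applying \eqref{j6} directly to $g \circ f$ yields
$$
\mathcal{T}(g \circ f)(u \otimes s)(x_1\otimes t_1,\ldots,x_p \otimes t_p) = (g \circ f)(u)(x_1,\ldots,x_p)\otimes s\,\bar{t}.
$$
These two expressions manifestly coincide on simple tensors, hence by bilinearity they coincide on all of $U \otimes \mathcal{S}$.

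There is no serious obstacle in this case: in contrast to the other compositions in \textbf{Prop. \ref{composicion}}, there is no need to invoke \eqref{widehat} or to identify $\mathcal{L}(\Theta_j)$ with something involving $f$, because the source $U$ carries no cochain structure and hence $\mathcal{T}(f)$ is simply $f \otimes \mathcal{S}$. Thus the only point to check is consistency of the two formulas \eqref{j6}, which is a one-line verification. I would close the proof by remarking that the equality $\mathcal{T}(g \circ f) = \mathcal{T}(g)\circ \mathcal{T}(f)$ has target $C^p(\g \otimes \mathcal{S}; W \otimes \mathcal{S})$ and source $U \otimes \mathcal{S}$, as required.
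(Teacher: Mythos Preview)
Your proof is correct and follows essentially the same approach as the paper: both recognize that $\mathcal{T}(f)=f\otimes\mathcal{S}$ by \eqref{trivial}, then apply \eqref{j6} twice to see that $\mathcal{T}(g)\circ\mathcal{T}(f)(u\otimes s)$ and $\mathcal{T}(g\circ f)(u\otimes s)$ both evaluate to $g(f(u))(x_1,\ldots,x_p)\otimes s\,\bar{t}$ on arbitrary arguments. Your additional remark explaining why this case avoids the machinery of \eqref{widehat} and $\mathcal{L}$ is accurate and a nice clarification.
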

\begin{proof}
By definition we know that $\mathcal{T}(f)=f \otimes \mathcal{S}$. Let $u$ be in $U$, $s$ be in $\mathcal{S}$ and $\textbf{x} \otimes \textbf{t}=(x_1 \otimes t_1,\ldots,x_p \otimes t_p)$ (see \eqref{notacion}). By \eqref{j6} it follows:
\begin{equation*}
\begin{split}
& \mathcal{T}(g)\circ \mathcal{T}(f)(u \otimes s)(\textbf{x} \otimes \textbf{t})=\mathcal{T}(g)\left(f(u)\otimes s\right)(\textbf{x} \otimes \textbf{t})\\
&=g(f(u))(\textbf{x})\otimes s\,\bar{t}=\mathcal{T}(g \circ f)(u \otimes s)(\textbf{x} \otimes \textbf{t}),
\end{split}
\end{equation*}
where $\textbf{x}=(x_1,\ldots,x_p)$. Thus, $\mathcal{T}(g)\circ \mathcal{T}(f)=\mathcal{T}(g \circ f)$.
\end{proof}

\begin{claim}{\sl
Let $f:C^p(\g;U) \to V$ and $g:V \to C^p(\g;W)$ be maps. Then $\mathcal{T}(g)\circ \mathcal{T}(f)=\mathcal{T}(g \circ f)$ is a map between $C^p(g \otimes \mathcal{S};U \otimes \mathcal{S})$ and $C^p(\g \otimes \mathcal{S};W \otimes \mathcal{S})$.}
\end{claim}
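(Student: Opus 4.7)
The plan is to unwind each of $\mathcal{T}(f)$, $\mathcal{T}(g)$, and $\mathcal{T}(g\circ f)$ according to whichever of the defining formulas \eqref{j5}--\eqref{j7} applies to it, and then verify the identity by direct substitution. Note that $f$ has source $C^p(\g;U)$ and target $V$, so $\mathcal{T}(f)$ is governed by formula \eqref{j7}; the composite $g\circ f$ goes from $C^p(\g;U)$ into $C^p(\g;W)$, so $\mathcal{T}(g\circ f)$ is governed by \eqref{j5}; and $g$ goes from $V$ into $C^p(\g;W)$, so $\mathcal{T}(g)$ is governed by \eqref{j6}. There are no new ingredients beyond these three definitions together with the linearity of $\mathcal{T}(g)$.

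Concretely, I would fix $\Lambda$ in $C^p(\g\otimes\mathcal{S};U\otimes\mathcal{S})$ and set
$$\bar v=\mathcal{T}(f)(\Lambda)=\sum_{j=1}^m f(\mathcal{L}(\Lambda_j))\otimes s_j\in V\otimes\mathcal{S},$$
using \eqref{j7}. Then for $\bar{\textbf{x}}\otimes\bar{\textbf{t}}=(x_1\otimes t_1,\ldots,x_p\otimes t_p)$, I apply $\mathcal{T}(g)$ term by term via \eqref{j6}, obtaining
$$\mathcal{T}(g)(\bar v)(x_1\otimes t_1,\ldots,x_p\otimes t_p)=\sum_{j=1}^m g\bigl(f(\mathcal{L}(\Lambda_j))\bigr)(x_1,\ldots,x_p)\otimes s_j\,\bar t,$$
where $\bar t=t_1\cdots t_p$. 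Rewriting $g(f(\mathcal{L}(\Lambda_j)))=(g\circ f)(\mathcal{L}(\Lambda_j))$ and comparing with the expansion of $\mathcal{T}(g\circ f)(\Lambda)(x_1\otimes t_1,\ldots,x_p\otimes t_p)$ provided by \eqref{j5} yields the desired equality on all $p$-tuples, hence $\mathcal{T}(g)\circ\mathcal{T}(f)=\mathcal{T}(g\circ f)$.

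There is essentially no obstacle here; the argument is purely bookkeeping and parallels the other claims treated in the Appendix. The only point that requires a moment of care is to recognize, when applying $\mathcal{T}(g)$ (which is defined on simple tensors by \eqref{j6}), that one must use linearity to distribute it over the sum $\sum_j f(\mathcal{L}(\Lambda_j))\otimes s_j$ and keep track of the factor $s_j\,\bar t$ that appears from \eqref{j6}, matching it against the $s_j\,\bar t$ coming from \eqref{j5}.
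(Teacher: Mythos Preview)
Your proposal is correct and follows essentially the same approach as the paper: apply \eqref{j7} to compute $\mathcal{T}(f)(\Lambda)$, then apply $\mathcal{T}(g)$ term by term via \eqref{j6} and linearity, and finally identify the result with $\mathcal{T}(g\circ f)(\Lambda)$ via \eqref{j5}. The paper's argument is line-for-line the same bookkeeping computation you outlined.
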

\begin{proof}
Let $\Lambda$ be in $C^p(\g \otimes \mathcal{S};U \otimes \mathcal{S})$; by \eqref{j7} we have:
\begin{equation}\label{A9}
\mathcal{T}(f)(\Lambda)=f \left(\mathcal{L}(\Lambda_1)\right)\otimes s_1+\ldots+f \left(\mathcal{L}(\Lambda_m)\right)\otimes s_m.
\end{equation}
We apply $\mathcal{T}(g)$ to \eqref{A9}, and from \eqref{j6} we get:
\begin{equation}\label{A10}
\begin{split}
&\mathcal{T}(g)\circ \mathcal{T}(f)(\Lambda)(\textbf{x}\!\otimes \!\textbf{t})\!=\!\mathcal{T}(g)\!\left(\sum_{j=1}^m f\left(\mathcal{L}(\Lambda_j)\right)\!\otimes \!s_j\right)(\textbf{x}\!\otimes\! \textbf{t})\\
&=\!\sum_{j=1}^m g \left(f\left(\mathcal{L}(\Lambda_j)\right)\!\otimes \!s_j)\right)(\textbf{x})\!\otimes \!s_j\,\bar{t}=\!\sum_{j=1}^m (g \circ f)\left(\mathcal{L}(\Lambda_j)\right)(\textbf{x})\!\otimes \!s_j\,\bar{t}\\
&=\mathcal{T}(g \circ f)(\Lambda)(\textbf{x} \!\otimes \!\textbf{t}).
\end{split}
\end{equation}
In the last step we use \eqref{j5}. Then $\mathcal{T}(g)\circ \mathcal{T}(f)=\mathcal{T}(g \circ f)$.
\end{proof}

\section*{Acknowledgements}

The author thanks the support 
provided by post-doctoral fellowship
CONAHCYT 769309. The author has no conflicts to disclose.

\bibliographystyle{amsalpha}

\end{document}